\newtheorem{thm}{Theorem}[section]
\newtheorem{lem}[thm]{Lemma}
\newtheorem{cor}[thm]{Corollary}
\newtheorem{prop}[thm]{Proposition}
\newtheorem{question}[thm]{Question}
\newtheorem{conj}[thm]{Conjecture}
\newtheorem{prob}[thm]{Problem}
\theoremstyle{definition} 
\newtheorem{defn}[thm]{Definition} 
\newtheorem{remark}[thm]{Remark}
\newtheorem{example}[thm]{Example}
\newtheorem{convention}[thm]{Convention}
\DeclareMathOperator{\codim}{codim}
\DeclareMathOperator{\pd}{pd}
\DeclareMathOperator{\Sym}{Sym}
\DeclareMathOperator{\Inc}{Inc}
\DeclareMathOperator{\ind}{ind}
\DeclareMathOperator{\ini}{in}
\DeclareMathOperator{\Min}{Min}
\def\Icc{{\mathcal I}}
\newcommand{\N}{\mathbb{N}}
\newcommand{\Z}{\mathbb{Z}}
\newcommand{\be}{{\bf e}}
\title{Codimension and Projective Dimension up to Symmetry}
\author{Dinh Van Le}
\address{Institut f\"ur Mathematik, Universit\"at Osnabr\"uck, 49069 Osnabr\"uck, Germany}
\email{dlevan@uos.de}
\author{Uwe Nagel}
\address{Department of Mathematics, University of Kentucky, 715 Patterson office tower, Lexington, KY 40506-0027, USA}
\email{uwe.nagel@uky.edu}
\author{Hop D. Nguyen}
\address{Institute of Mathematics, Vietnam Academy of Science and Technology, 18 Hoang Quoc Viet, 10307 Hanoi, Vietnam}
\email{ngdhop@gmail.com}
\author{Tim R\"{o}mer}
\address{Institut f\"ur Mathematik, Universit\"at Osnabr\"uck, 49069 Osnabr\"uck, Germany}
\email{troemer@uos.de}
\begin{document}

\begin{abstract}
Symmetric ideals in increasingly larger polynomial rings that form an ascending chain are investigated. We focus on the asymptotic behavior of codimensions and projective dimensions  of ideals in such a chain. If the ideals are graded it is known that the codimensions grow eventually linearly. Here this result is extended to chains of arbitrary symmetric ideals. Moreover, the slope of the linear function is explicitly determined. We conjecture that the projective dimensions  also grow eventually linearly. As part of the evidence we establish two non-trivial lower linear bounds of the projective dimensions for chains of monomial ideals. As an application, this yields Cohen-Macaulayness obstructions.
\end{abstract}

\thanks{The second author was partially supported by Simons Foundation grant \#317096. The third author was partially supported by a postdoctoral fellowship from the Vietnam Institute for Advanced Study in Mathematics (VIASM). Nguyen was also partially supported by Project ICRTM01$\_$2019.01 of the International Centre for Research and Postgraduate Training in Mathematics (ICRTM), Institute of Mathematics, VAST}

\keywords{Invariant ideal, monoid, polynomial ring, symmetric group}
\subjclass[2010]{13A50, 13C15, 13D02, 13F20, 16P70, 16W22}

\maketitle

\section{Introduction}

Ascending chains of ideals that are invariant under actions of symmetric groups have recently attracted considerable attention. They arise naturally in various areas of mathematics, such as
algebraic chemistry \cite{AH07, Dr10},
group theory \cite{Co67},
representation theory \cite{CEF, NR17+, PS, SS-14,SS-16},
toric algebra and algebraic statistics \cite{BD11,Dr14,DEKL15,DK14,HM13,HS12,HS07,SS03},
which provide frameworks and motivations for further studies. In \cite{LNNR} we investigated the behavior of the Castelnuovo-Mumford regularity along graded ideals in such a chain. Here we study the analogous problem for codimension and projective dimension.

Let $\N$ denote the set of positive integers. Throughout the paper, fix an integer $c\in \N$ and any field $K$. For each $n\in \N$, let
\[
 R_n=K[x_{k,j}\mid 1\le k\le c,1\le j\le n]
\]
be the polynomial ring in $c\times n$ variables over $K$. These form an ascending chain
\[
R_1\subseteq R_2\subseteq\cdots\subseteq R_n\subseteq\cdots.
\]
Let $\Sym(n)$ denote the symmetric group on $\{1,\dots,n\}$. Considering it as stabilizer of $n+1$ in $\Sym (n+1)$, similarly one gets an ascending chain of symmetric groups. Define an action of $\Sym(n)$ on $R_n$ induced by
\[
 \sigma \cdot x_{k,j}=x_{k,\sigma(j)} \quad\text{for every}\ \sigma\in \Sym(n),\ 1\le k\le c,1\le j\le n.
\]
A sequence of ideals $(I_n)_{n\ge1}$ with $I_n\subseteq R_n$ is called \emph{$\Sym$-invariant} if
\[
 \Sym(n)(I_m)\subseteq I_n\quad\text{for all }\ m\le n.
\]
Observe that these ideals form an ascending chain as $I_n \cdot R_{n+1} \subset I_{n+1}$.

Even if one is primarily interested in $\Sym$-invariant chains  it is more convenient to work with a larger class of invariant objects, namely $\Inc^i$-invariant chains, where $\Inc^i$ denotes a certain monoid of increasing functions on $\N$ (see \Cref{sec2} for more details).

In \cite{NR17}, the second and fourth author introduced  Hilbert series for $\Inc^i$-invariant chains and proved that these series are rational (see also \cite[Theorem 4.3]{KLS} for another approach and \cite[Theorems 2.4 and 3.3]{GN} for some explicit results in a special case). As a consequence, they determined the asymptotic behavior of the Krull dimension and multiplicity of graded ideals in an $\Inc^i$-invariant chain: the Krull dimension grows eventually linearly, whereas the multiplicity grows eventually exponentially. This result motivates a more general line of investigations:

\begin{prob}
 Study the asymptotic behavior of invariants of ideals in $\Sym$-invariant or, more generally, $\Inc^i$-invariant chains.
\end{prob}

In \cite{LNNR}, this problem was studied in the case of the Castelnuovo-Mumford regularity. There we conjectured that this invariant grows eventually linearly and provided some evidence supporting this conjecture. In particular, a linear upper bound for the Castelnuovo-Mumford regularity of graded ideals was established.
As mentioned above, the present work studies the asymptotic behavior of codimensions (i.e.\ heights) and projective dimensions of ideals in $\Inc^i$-invariant chains.

For an $\Inc^i$-invariant chain of graded ideals $(I_n)_{n\ge 1}$, it follows from \cite[Theorem 7.10]{NR17} that $\codim I_n$ is eventually a linear function. However, not much is known about this function. Here, we extend this result to $\Inc^i$-invariant chains of ideals that are not necessarily graded. More importantly, our new approach also produces an explicit description for the leading coefficient of the linear function (see \Cref{codim}).

To motivate our study on the asymptotic behavior of the projective dimension, let us consider a simple example.

\begin{example}
\label{ex1}
Let $(I_n)_{n\ge 1}$ be an $\Inc^1$-invariant chain with
 \[
  I_n=\begin{cases}
       \langle0\rangle & \text{if } \ n=1,2,3,\\
       \langle x_{1,2}^3,x_{1,4}^2x_{2,1},x_{2,2}x_{3,3}\rangle& \text{if } \ n=4,\\
       \langle\Inc^1_{4,n}(I_4)\rangle& \text{if } \ n\ge5
      \end{cases}
 \]
(see \Cref{ex2} for an explicit description of the ideals in this chain). Computations with Macaulay2 \cite{GS} yield the following table:

\begin{table}[h]
\begin{tabular}{|c|c|c|c|c|c|c|c|}
\hline
\multicolumn{1}{|c|}{$n$} & 4 & 5 & 6 & 7  & 8  & 9  & 10 \\ \hline
$\pd(R_n/I_n)$            & 3 & 6 & 8 & 10 & 12 & 14 & 16 \\ \hline
\end{tabular}
\end{table}

\noindent This table suggests that $\pd(R_n/I_n)$ could be a linear function with slope $2$ when $n\ge5$. 
\end{example}

The previous example and many other computational experiments lead us to the following expectation:

\begin{conj}
\label{conj}
 Let $(I_n)_{n\ge 1}$ be a $\Sym$-invariant or,  more generally, an  $\Inc^i$-invariant chain  of  ideals. Then $\pd (R_n/I_n)$ is eventually a linear function, that is,
\[
\pd (R_n/I_n) =an+b \quad \text{for some integer constants } a,\; b\ \text{ whenever }\ n\gg 0.
\]
\end{conj}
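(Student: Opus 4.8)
\textbf{A possible strategy for \Cref{conj}.}
The plan is to first translate the statement into one about depth. Since each $R_n$ is a polynomial ring, the Auslander--Buchsbaum formula gives $\pd(R_n/I_n)=cn-\operatorname{depth}(R_n/I_n)$ (applied at each maximal ideal when $I_n$ is not graded), so \Cref{conj} is equivalent to the assertion that $\operatorname{depth}(R_n/I_n)$ is eventually linear in $n$. One inequality comes for free: $\pd(R_n/I_n)\ge\codim I_n$ always, and by \Cref{codim} the right-hand side is eventually linear with an explicitly known slope. Hence the real content is a matching linear \emph{upper} bound for $\pd$ (equivalently a linear lower bound for depth), together with the finer fact that the two sides eventually agree exactly.

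To attack the upper bound I would reduce to the monomial case by Gröbner degeneration. After a routine homogenization that preserves invariance (the new variable is fixed by the group) we may assume the $I_n$ are graded; choosing a term order on each $R_n$ compatible with the $\Inc^i$-action, the equivariant Gröbner theory of Hillar--Sullivant and Nagel--Römer shows that the chain of initial ideals $(\ini(I_n))_{n\ge1}$ is again $\Inc^i$-invariant, while $\pd(R_n/I_n)\le\pd(R_n/\ini(I_n))$. It therefore suffices to bound $\pd(R_n/J_n)$ linearly from above for an $\Inc^i$-invariant chain of \emph{monomial} ideals $(J_n)$. For such a chain the minimal monomial generators of $J_n$ lie, up to the action, in a fixed finite set, so each generator uses variables from only a bounded number of the blocks $\{x_{1,j},\dots,x_{c,j}\}$; the blocks met by no generator split off as a polynomial factor and already contribute a linear amount of depth, which is the mechanism behind the linear growth of $\pd$ in \Cref{ex1}.

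Making this quantitative is the crux. I would try to read off $\operatorname{depth}(R_n/J_n)$ from the $\mathbb{Z}^{cn}$-graded local cohomology $H^i_{\mathfrak m}(R_n/J_n)$ via Takayama's formula, aiming to show that for each fixed $i$ the simplicial complexes $\Delta_{\mathbf a}$ contributing to $H^i_{\mathfrak m}$ become acyclic once $\mathbf a$ is supported on more than a linear number of blocks, so that $H^i_{\mathfrak m}(R_n/J_n)=0$ for $i$ below a linear threshold; equivalently one could assemble an explicit non-minimal resolution of $R_n/J_n$ of the correct length from a Taylor- or Lyubeznik-type resolution on the finitely many ``active'' blocks and a Koszul complex on the untouched ones. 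Obtaining the \emph{exact} eventual linear function, and not merely $\pd(R_n/J_n)=\Theta(n)$, is more delicate; in the spirit of the rationality theorems of \cite{NR17} for equivariant Hilbert series, I expect it to follow from an equivariant rationality statement for the graded Betti numbers, or for the Hilbert series of the local cohomology modules, along the chain.

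The step I expect to be the main obstacle — and the reason this is stated only as a conjecture — is controlling cancellation in the minimal free resolution uniformly along the chain: bounded generator width does not bound $\pd$ (\Cref{ex1}), so one must understand precisely how the overlapping $\Inc^i$-orbits of generators interact. A promising device is the stabilization of the lcm-lattices $L(J_n)$ as $n\to\infty$, combined with the Gasharov--Peeva--Welker description of Betti numbers through order complexes of intervals of $L(J_n)$; a sufficiently good analysis there should simultaneously yield the linear upper bound and the exact slope, and would presumably also bear on the analogous regularity conjecture of \cite{LNNR}.
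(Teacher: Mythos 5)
The statement you are addressing is labeled as a \emph{conjecture} in the paper, and the paper does not prove it; it offers only partial evidence, namely the trivial bounds $cn \ge \pd(R_n/I_n) \ge \codim I_n$ (\Cref{pd both bounds}), the case where $I_n$ is eventually perfect (\Cref{CM}), two improved \emph{lower} linear bounds for chains of monomial ideals (Theorems~\ref{pd_bound} and \ref{pd bound 2}), and a near-resolution when $c=1$. Your proposal is likewise a research program rather than a proof, and you say so; within that caveat your framing is sound on several points that match the paper's setup: the reduction via depth and Auslander--Buchsbaum (the paper makes this remark for graded ideals), the passage to monomial ideals via an $\Inc^i$-compatible monomial order using $\pd(R_n/I_n)\le\pd(R_n/\ini_\le(I_n))$ together with \Cref{lem_initial_filtration}, and the observation that the codimension, by \Cref{codim}, already gives a linear lower bound with an explicit slope. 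You also correctly identify that a matching linear \emph{upper} bound is the missing ingredient that the paper does not supply; the paper's two theorems go the other way, strengthening the lower bound to $\Gamma_i(\Icc)n+\tilde{D}(\Icc)$ and to $\max\{\gamma_i(\Icc:v_M)\}n+\tilde D(\Icc)$.

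There is, however, a concrete flaw in the heuristic you give for the upper bound. You write that ``the blocks met by no generator split off as a polynomial factor and already contribute a linear amount of depth, which is the mechanism behind the linear growth of $\pd$ in \Cref{ex1}.'' This is not what happens: for an $\Inc^i$-invariant chain the generators of $I_n$ eventually involve \emph{every} column index $1,\dots,n$ (see \Cref{ex2}, where $I_n$ for $n\ge 5$ has generators using $x_{1,n}, x_{3,n-1}$, etc.), so there are no untouched blocks to split off. What is bounded is the \emph{width} of each individual $\Inc^i$-orbit of generators, not the support of the whole ideal, and as you acknowledge in your last paragraph this alone does not control $\pd$. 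Be careful also with the phrase ``the two sides eventually agree exactly'': the conjecture does \emph{not} assert $\pd(R_n/I_n)=\codim I_n$ for $n\gg0$; the slope of the conjectural linear function for $\pd$ may well exceed $\gamma_i(\Icc)$, as the paper's bound by $\Gamma_i(\Icc)>\gamma_i(\Icc)$ shows. Your suggestions about Takayama's formula, lcm-lattice stabilization, and equivariant rationality of Betti numbers are plausible directions, but none of them appear in the paper, so you are genuinely proposing a different (and currently unrealized) route rather than recovering an argument the authors already have.
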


It should be noted that this conjecture as well as \cite[Conjecture 1.1]{LNNR} is seemingly of parallel nature to the well-known asymptotic linearity of the Castelnuovo-Mumford regularity of the powers of a graded ideal shown independently by Cutkosky, Herzog, and Trung \cite[Theorem 1.1(ii)]{CHT} and Kodiyalam \cite[Theorem 5]{K}.

Since $\codim I_n$ is eventually a linear function, \Cref{conj} is clearly true if $I_n$ is perfect for $n\gg 0$ (see \Cref{CM}). Note that, for example, a graded Cohen-Macaulay ideal is perfect.

It is not hard to give linear upper and lower bounds for $\pd (R_n/I_n)$ (see \Cref{pd both bounds}) because
\[
 cn\ge \pd (R_n/I_n) \ge \codim I_n.
\]
Our next main results establish improved lower linear bounds for $\pd (R_n/I_n)$ in the case of a chain of monomial ideals (see Theorems \ref{pd_bound} and \ref{pd bound 2}). These also yield necessary conditions for the Cohen-Macaulayness of $R_n/I_n$ when $n \gg 0$ (see Corollaries \ref{cor:CM condition} and \ref{cor:CM obsrruction}). Note that by the Auslander-Buchsbaum formula all statements on projective dimensions of graded ideals can equivalently be stated as results on depths. 

The paper is divided into four sections. \Cref{sec2} contains some basic notions and facts on invariant chains of ideals. The asymptotic behavior of codimensions and projective dimensions of ideals in such chains are discussed in \Cref{sec3,sec4}, respectively.

{\bf Acknowledgement.} We are grateful to the anonymous referees for insightful comments and suggestions that helped a lot to improve the clarity of the paper. The first author wishes to thank Lorenzo Venturello for his help with computations using Macaulay2.


\section{Preliminaries}\label{sec2}

We keep the notation and definitions of the introduction. In particular, $c$ is a fixed positive integer, and for each $n\ge 1$, $R_n$ denotes the polynomial ring in $c\times n$ variables over a field $K$. Let
\[
 R=\bigcup_{n\ge1}R_n=K[x_{k,j}\mid 1\le k\le c, j\ge 1]
\]
be the polynomial ring in ``$c\times\N$'' variables. The action of $\Sym(n)$ on $R_n$  given by
\[
 \sigma \cdot x_{k,j}=x_{k,\sigma(j)} \quad\text{for all }\ \sigma\in \Sym(n),\ 1\le k\le c,1\le j\le n,
\]
clearly induces an action of 
$$\Sym(\infty)=\bigcup_{n\ge1}\Sym(n)$$ 
on $R$. Recall that a chain of ideals $(I_n)_{n\ge 1}$ with $I_n\subseteq R_n$ is {$\Sym$-invariant} (or {$\Sym(\infty)$-invariant}) if
\[
 \Sym(n)(I_m)=\{\sigma(f)\mid f\in I_m,\ \sigma\in\Sym(n)\}\subseteq I_n\quad\text{for all }\ m\le n.
\]

Often, it is inconvenient to work with $\Sym$-invariant chains. The main reason
is that the group $\Sym(\infty)$ is not compatible  with  monomial orders on $R$; see \cite[Remark 2.1]{BD11}. In particular, the initial chain $(\ini_{\le}(I_n))_{n\ge 1}$ of a $\Sym$-invariant chain $(I_n)_{n\ge 1}$ is typically not $\Sym$-invariant (see \Cref{ex3}). 

To overcome this difficulty, one considers the following monoid of increasing functions on $\N$:
\[
 \Inc = \{\pi \colon \N \to \N \mid \pi(j)<\pi(j+1)\ \text{ for all }\ j\ge 1\},
\]
and more generally, submonoids of $\Inc$ that fix initial segments of $\N$:
\[
\Inc^i=\{\pi \in \Inc \mid \pi(j)=j\ \text{ for all }\ j\le i\},
\]
where $i\ge0$ is an integer. Observe that one has a descending chain of monoids 
\[ 
\Inc=\Inc^0 \supset \Inc^1 \supset \Inc^2 \supset \cdots.
\]

The action of $\Inc^i$ on $R$ is defined analogously to that of $\Sym(\infty)$. We say that
a chain $(I_n)_{n\ge 1}$ with $I_n$ an ideal in $R_n$ is \emph{$\Inc^i$-invariant} if
\[
 \Inc^i_{m,n}(I_m)\subseteq I_n\quad \text{for all }\ m\le n,
\]
where
\[
\Inc^i_{m,n}=\{\pi \in \Inc^i \mid \pi(m)\le n\}.
\]
It is evident that every $\Inc^i$-invariant chain is also $\Inc^{i+1}$-invariant. Moreover, for any $f\in R_m$ and $\pi\in\Inc^i_{m,n}\ (m\le n)$, it is easy to find a permutation $\sigma\in\Sym(n)$ such that $\pi f=\sigma f$ (see, e.g., \cite[Lemma 7.6]{NR17}). Hence, $\Inc^i_{m,n}\cdot f\subseteq\Sym(n) \cdot f$.
It follows that every $\Sym$-invariant chain is also an $\Inc^i$-invariant chain.

A fundamental result of Hillar and Sullivant \cite[Theorem 3.1]{HS12} (see also \cite[Corollary 3.6]{NR17}) implies that every $\Inc^i$-invariant chain $\Icc=(I_n)_{n\ge 1}$ \emph{stabilizes}, meaning that there exists an integer $r\ge1$ such that, as ideals in $R_n$, one has 
\[
I_n=\langle\Inc^i_{r,n}(I_r)\rangle_{R_n} \quad\text{ for all $n\ge r$,}
\]
or equivalently,
\[
I_n=\langle\Inc^i_{m,n}(I_m)\rangle_{R_n} \quad\text{ for all $n\ge m\ge r$}
\]
(see \cite[Lemma 5.2, Corollary 5.4]{NR17}). The least integer $r$ with this property is called the \emph{$i$-stability index} of $\Icc$, denoted by 
\[
\ind^i(\Icc).
\]

\begin{example}
	\label{ex2}
	Let $\Icc=(I_n)_{n\ge 1}$ be the $\Inc^1$-invariant chain considered in \Cref{ex1}. Evidently, $\ind^1(\Icc)=4$. Since $x_{2,1}$ is fixed under the action of $\Inc^1$, some non-zero ideals of $\Icc$ are
	\begin{align*}
		I_4&=\langle x_{1,2}^3,\; x_{1,4}^2x_{2,1},\; x_{2,2}x_{3,3}\rangle, \\
		I_5&=I_4+\langle x_{1,3}^3,\; x_{1,5}^2x_{2,1},\; x_{2,2}x_{3,4},\; x_{2,3}x_{3,4}\rangle, \\
		I_6&=I_5+\langle x_{1,4}^3,\; x_{1,6}^2x_{2,1},\; x_{2,2}x_{3,5},\; x_{2,3}x_{3,5},\; x_{2,4}x_{3,5}\rangle, \\
		I_7&=I_6+\langle x_{1,5}^3,\; x_{1,7}^2x_{2,1},\; x_{2,2}x_{3,6},\; x_{2,3}x_{3,6},\; x_{2,4}x_{3,6},\; x_{2,5}x_{3,6}\rangle.
	\end{align*}
	By induction one can show that for all $n\ge 5$:
	\[
	I_n=I_{n-1}+\langle x_{1,{n-2}}^3,\; x_{1,n}^2x_{2,1},\; x_{2,2}x_{3,n-1},\; x_{2,3}x_{3,n-1},\dots,\; x_{2,n-2}x_{3,n-1}\rangle.
	\]
\end{example}

When working with invariant chains of ideals, a key advantage of the monoids $\Inc^i$ over the group $\Sym(\infty)$ is that the monoids $\Inc^i$ behave well with certain monomial orders on $R$, and the initial chain of any $\Inc^i$-invariant chain with respect to such an order is again $\Inc^i$-invariant (see \Cref{lem_initial_filtration} below). We say that a monomial order $\le$ \emph{respects} $\Inc^i$ if $\pi(u)\le \pi(v)$ whenever $\pi \in \Inc^i$ and $u,v$ are monomials of $R$ with $u\le v$. This condition implies that
\[
\ini_{\le}(\pi(f))=\pi(\ini_{\le}(f))\quad \text{for all } f\in R\ \text{ and } \pi \in \Inc^i.
\]
Examples of  monomial orders respecting $\Inc^i$ include the lexicographic order and the reverse-lexicographic order on $R$ induced by the following ordering of the variables:
\[
x_{k,j}\le x_{k',j'}\quad \text{if either $k<k'$ or $k=k'$ and $j<j'$}.
\]
Throughout this paper, whenever $\le$ is a monomial order on $R$, we will use the same notation to denote its restrictions to the subrings $R_n$.

\begin{example}
	\label{ex3}
	Let $c=1$ and assume that the field $K$ has characteristic $0$. Consider the ideals
	\begin{align*}
		I_3&=\langle x_1^2+x_2x_3,\; x_2^2+x_1x_3,\; x_3^2+x_1x_2\rangle,\\
		I_4&=\Sym(4)(I_3)\\
		&=I_3+\langle x_1^2+x_2x_4,\; x_1^2+x_3x_4,\; x_2^2+x_1x_4,\; x_2^2+x_3x_4,\\
		&\hspace{1.43cm}x_3^2+x_1x_4,\; x_3^2+x_2x_4,\;
		x_4^2+x_1x_2,\; x_4^2+x_1x_3,\; x_4^2+x_2x_3\rangle.
	\end{align*}
	Using the reverse-lexicographic order with $x_1<x_2<x_3<x_4$ one obtains by computations with Macaulay2 that
	\begin{align*}
		\ini(I_3)&=\langle x_2^2, \; x_3x_2,\; x_3^2,\; x_2x_1^2,\; x_3x_1^2,\; x_1^4 \rangle,\\
		\ini(I_4)&=\langle x_2x_1,\; x_3x_1,\; x_4x_1,\; x_2^2,\; x_3x_2,\; x_4x_2,\; x_3^2,\; x_4x_3,\; x_4^2,\; x_1^3 \rangle.
	\end{align*}
	Since $x_2^2\in \ini(I_3)$ but $x_1^2\not\in \ini(I_3),$ we see that $\Sym(3)(\ini(I_3))\nsubseteq\ini(I_3)$, i.e., $\ini(I_3)$ is not $\Sym(3)$-stable. By the same reason,
	$
	\Sym(4)(\ini(I_3))\nsubseteq\ini(I_4).
	$
	Note, however, that $\ini(I_3)$ is trivially $\Inc_{3,3}$-stable, and moreover,
	\[
	\Inc_{3,4}(\ini(I_3))=\ini(I_3)+\langle x_4x_2,\; x_4x_3,\; x_4^2,\; x_4x_1^2 \rangle\subseteq\ini(I_4).
	\]
\end{example}

The phenomenon in the preceding example holds true more generally:

\begin{lem}[{\cite[Lemma 7.1]{NR17}}]
	\label{lem_initial_filtration}
	Let $\Icc=(I_n)_{n\ge 1}$ be an $\Inc^i$-invariant chain of ideals. Then for any monomial order $\le$ respecting $\Inc^i$, the chain $\ini_{\le}(\Icc)=(\ini_{\le}(I_n))_{n\ge 1}$ is also $\Inc^i$-invariant and
	$$
	\ind^i(\Icc) \le \ind^i(\ini_{\le}(\Icc)).
	$$
\end{lem}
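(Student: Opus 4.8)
The plan is to treat the two assertions separately. The $\Inc^i$-invariance of $\ini_\le(\Icc)$ is a formal consequence of the identity $\ini_\le(\pi f)=\pi(\ini_\le(f))$, which holds because $\le$ respects $\Inc^i$; and the inequality of stability indices is obtained by showing that $\Icc$ already stabilizes from $r:=\ind^i(\ini_\le(\Icc))$ on, a statement that is in turn reduced to an equality of initial ideals.

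\emph{Step 1: $\ini_\le(\Icc)$ is $\Inc^i$-invariant.} Fix $m\le n$ and $\pi\in\Inc^i_{m,n}$. Since $\ini_\le(I_m)$ is generated, as an ideal of $R_m$, by the monomials $\ini_\le(f)$ with $f\in I_m$, the extension $\langle\pi(\ini_\le(I_m))\rangle_{R_n}$ is generated by the monomials $\pi(\ini_\le(f))$, $f\in I_m$. As $\le$ respects $\Inc^i$, each such monomial equals $\ini_\le(\pi f)$; and as $\Icc$ is $\Inc^i$-invariant, $\pi f\in I_n$, so $\ini_\le(\pi f)\in\ini_\le(I_n)$. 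Hence $\langle\pi(\ini_\le(I_m))\rangle_{R_n}\subseteq\ini_\le(I_n)$, and in particular $\Inc^i_{m,n}(\ini_\le(I_m))\subseteq\ini_\le(I_n)$, which is exactly the $\Inc^i$-invariance of $\ini_\le(\Icc)$.

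\emph{Step 2: reducing the index inequality to initial ideals.} By the Hillar--Sullivant stabilization result recalled above, both $\Icc$ and $\ini_\le(\Icc)$ stabilize; put $r:=\ind^i(\ini_\le(\Icc))$, so that $\ini_\le(I_n)=\langle\Inc^i_{r,n}(\ini_\le(I_r))\rangle_{R_n}$ for all $n\ge r$. It suffices to prove $I_n=J_n$ for every $n\ge r$, where $J_n:=\langle\Inc^i_{r,n}(I_r)\rangle_{R_n}$, since this says precisely that $\ind^i(\Icc)\le r$. The inclusion $J_n\subseteq I_n$ is immediate from the $\Inc^i$-invariance of $\Icc$. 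For the reverse, I would show $\ini_\le(J_n)=\ini_\le(I_n)$: the inclusion ``$\subseteq$'' is clear, while every generator $\pi(\ini_\le(f))=\ini_\le(\pi f)$ of $\ini_\le(I_n)$ (with $f\in I_r$ and $\pi\in\Inc^i_{r,n}$) lies in $\ini_\le(J_n)$ because $\pi f\in\Inc^i_{r,n}(I_r)\subseteq J_n$.

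\emph{Step 3: conclusion.} An ideal of $R_n$ is pinned down by its initial ideal once one inclusion is known: if $J_n\subseteq I_n$ and $\ini_\le(J_n)=\ini_\le(I_n)$, then by Macaulay's theorem the monomials not lying in the common initial ideal form a $K$-basis of both $R_n/J_n$ and $R_n/I_n$, and the canonical surjection $R_n/J_n\twoheadrightarrow R_n/I_n$ carries this basis to itself; hence it is bijective and $J_n=I_n$. The only point demanding genuine care is that Step 3 must be carried out inside the \emph{finitely generated} ring $R_n$, whose monomial order is a bona fide term order, i.e.\ a well-order on monomials, so that the division algorithm underpinning Macaulay's theorem terminates; the same argument would fail verbatim in the infinite-variable ring $R$, where, for instance, the lexicographic order is not a well-order. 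Granting Step 3, we get $I_n=J_n$ for all $n\ge r$, hence $\ind^i(\Icc)\le r=\ind^i(\ini_\le(\Icc))$.
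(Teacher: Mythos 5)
Your proof is correct, and it follows what is essentially the standard (and most natural) argument: invariance of the initial chain from the compatibility $\ini_\le(\pi f)=\pi(\ini_\le(f))$, and the stability-index inequality by first matching initial ideals, $\ini_\le(J_n)=\ini_\le(I_n)$ for $J_n=\langle\Inc^i_{r,n}(I_r)\rangle_{R_n}$, and then lifting to $J_n=I_n$ via the Macaulay basis argument (equivalently, leading-term reduction using the well-ordering on monomials of $R_n$). The paper itself gives no proof and simply cites \cite[Lemma 7.1]{NR17}, which is proved by the same method. Your remark that the Macaulay/Noetherian step must be carried out in the finite polynomial ring $R_n$ rather than in $R$ is a worthwhile observation, and the argument is complete and correct as written.
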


We conclude this section with two auxiliary results that will be used frequently. The first one slightly generalizes one part of \cite[Lemma 6.4]{NR17}.

\begin{lem}
\label{colon chain}
Let $\Icc=(I_n)_{n\ge 1}$ be an $\Inc^i$-invariant chain of monomial ideals and $v\in R_i$ a monomial. Then  the chain $\Icc:v=(I_n:v)_{n\ge 1}$ is also $\Inc^i$-invariant and $$\ind^i(\Icc:v)\le \ind^i(\Icc).$$
\end{lem}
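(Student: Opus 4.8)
The plan is to show directly that the colon chain $\Icc : v = (I_n : v)_{n \ge 1}$ satisfies the defining inclusion for $\Inc^i$-invariance, and that the stability index does not increase. First I would fix $m \le n$ and $\pi \in \Inc^i_{m,n}$, and take a monomial $w \in I_m : v$, so that $wv \in I_m$. The key point is that since $v \in R_i$ and $\pi$ fixes the first $i$ indices (i.e.\ $\pi(j) = j$ for $j \le i$), we have $\pi(v) = v$. Therefore $\pi(wv) = \pi(w)\pi(v) = \pi(w) v$. Since $\Icc$ is $\Inc^i$-invariant and $wv \in I_m$, we get $\pi(w) v = \pi(wv) \in I_n$, hence $\pi(w) \in I_n : v$. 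As $I_n : v$ is a monomial ideal, it is generated by such monomials $\pi(w)$ for $w$ ranging over a monomial generating set of $I_m : v$, and this shows $\Inc^i_{m,n}(I_m : v) \subseteq I_n : v$, i.e.\ $\Icc : v$ is $\Inc^i$-invariant.

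For the stability-index bound, set $r = \ind^i(\Icc)$. I would show that for all $n \ge m \ge r$ one has $I_n : v = \langle \Inc^i_{m,n}(I_m : v)\rangle_{R_n}$; the inclusion "$\supseteq$" is just the invariance just established, so the content is "$\subseteq$". Take a monomial $w \in I_n : v$, so $wv \in I_n$. Since $n \ge m \ge r$, stability of $\Icc$ gives $I_n = \langle \Inc^i_{m,n}(I_m)\rangle_{R_n}$, so $wv = \sum_j g_j \pi_j(f_j)$ with $\pi_j \in \Inc^i_{m,n}$, $f_j \in I_m$, $g_j \in R_n$; passing to the monomial generators, we may assume $wv$ is divisible by some $\pi(f)$ with $\pi \in \Inc^i_{m,n}$, $f \in I_m$ a monomial. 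Then $\pi(f) \mid wv$. Here I need to extract from this that $w$ is divisible by $\pi(f')$ for some monomial $f' \in I_m : v$. Write $f = f' \cdot \gcd(f, v^\infty)$-type decomposition — more precisely, for each variable let the exponent of $f'$ be $\max(0, a - b)$ where $a$ is the exponent in $f$ and $b$ the exponent in $v$; then $f' \mid f/\gcd(f,v)$ in the appropriate sense and $f' v$ is divisible by $f$, so $f' \in I_m : v$. Using again that $\pi$ fixes $v$, one checks $\pi(f')$ divides $w$: from $\pi(f) \mid wv$ and $\pi(f) \mid \pi(f') \cdot v$ (apply $\pi$, using $\pi(v)=v$), a variable-by-variable exponent comparison gives the divisibility $\pi(f') \mid w$. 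Hence $w \in \langle \Inc^i_{m,n}(I_m : v)\rangle_{R_n}$, completing the reverse inclusion, so $\ind^i(\Icc : v) \le r$.

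The main obstacle I anticipate is the divisibility bookkeeping in the last step: one must be careful that the monomial $f'$ constructed from $f$ by "subtracting the exponents of $v$" genuinely lies in $I_m : v$ (this uses $f \mid f' v$, which holds because $f' $ has exponent $\max(0, a_k - b_k)$ so $f'v$ has exponent $\max(b_k, a_k) \ge a_k$ in each variable) and that $\pi(f') \mid w$. The latter is the delicate inequality: in variable $x_{k,\ell}$, the exponent of $w$ is the exponent of $wv$ minus that of $v$; the exponent of $\pi(f)$ in $x_{k,\ell}$ is at most that of $wv$; and since $v$ is supported only on indices $\le i$ which $\pi$ fixes, the exponents of $\pi(f')$ and $\pi(f)$ differ in $x_{k,\ell}$ exactly by the exponent of $v$ there — so $\pi(f') \mid w$ follows. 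This is precisely the generalization of \cite[Lemma 6.4]{NR17} from the case $v = x_{k,1}$ (or a single variable in $R_i$) to an arbitrary monomial $v \in R_i$, and the argument is the same once one tracks that $\pi$ acts trivially on $v$.
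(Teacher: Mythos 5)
Your proof is correct and follows essentially the same idea as the paper's, which reduces by induction on the number of variable-power factors of $v$ to the case of a single variable power and then invokes the argument of \cite[Lemma~6.4]{NR17}; the key observation in both is that $\pi \in \Inc^i$ fixes $v \in R_i$, so forming $I_n : v$ is compatible with the $\Inc^i$-action. Your version simply carries out the exponent bookkeeping (passing from $f \mid wv$ to $\pi(f') \mid w$ with $f' = f/\gcd(f,v)$) directly for an arbitrary monomial $v$ in one pass rather than reducing to the one-variable case, and it checks out.
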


\begin{proof}
 Write $v=x_{k_1,j_1}^{e_1}\cdots x_{k_m,j_m}^{e_m}$ with $j_1,\dots,j_m\le i$. By induction on $m$, we may assume $v=x_{k_1,j_1}^{e_1}$. But in this case the result follows by using the same argument as in the proof of \cite[Lemma 6.4]{NR17}.
\end{proof}

For the next result we need further notation.
The \emph{$i$-shift} $\sigma_i\in\Inc^i$ is given by
\[
\sigma_i(j) =
\begin{cases}
j &\text{if }\ 1\le j\le i,\\
j+1 &\text{if }\ j\ge i+1.
\end{cases}
\]
For a graded ideal $J$ in $R_n$ we write $\delta(J)$ for the largest degree of a minimal homogeneous generator of $J$. Note that $\delta(J)$ is well-defined since the degree sequence of any minimal set of homogeneous generators of $J$ is uniquely determined by $J$ (this is true more generally for all graded Betti numbers of $J$; see, e.g., \cite[Proposition 1.5.16]{BH}). Now define the \emph{$q$-invariant} of $J$ as
\[
q(J)=\sum_{j=0}^{\delta(J)}\dim_K (R_n/J)_j.
\]

\begin{lem}
 \label{e chain}
        Let $\Icc=(I_n)_{n\ge 1}$ be an $\Inc^i$-invariant chain of monomial ideals. 
	For each ${\bf e}=(e_1,\ldots,e_c)\in \mathbb{Z}^c_{\ge0}$, consider a chain of monomial ideals $\Icc_{\bf e}=(I_{{\bf e},n})_{n\ge 1}$
	given by
	\[I_{{\bf e},n}=
	\langle (I_n:x_{1,i+1}^{e_1}\cdots x_{c,i+1}^{e_c}),x_{1,i+1},\ldots,x_{c,i+1}\rangle \quad\text{for all }\ n\ge 1.
	\]
	Then the following statements hold:
	\begin{enumerate}
	 \item
	 $\Icc_{\bf e}$ is an $\Inc^{i+1}$-invariant chain with $\ind^{i+1}(\Icc_{\bf e})\le \ind^i(\Icc)+1$.

        \item
	Fix ${\bf e}\in \mathbb{Z}^c_{\ge0}$. Then for every $r\ge \ind^i(\Icc)$ one has
	\[
	 q(I_{{\bf e},r+1})\le q(I_r),
	\]
	and  equality holds if and only if 
	\[
	I_{{\bf e},n+1}=\langle\sigma_i(I_n),x_{1,i+1},\ldots,x_{c,i+1}\rangle\ \text{ and }\  R_{n+1}/I_{{\bf e},n+1} \cong R_n/I_n\quad \text{for all }\ n\ge r.
	\]
	\end{enumerate}
\end{lem}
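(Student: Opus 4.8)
The plan is to prove the two parts separately, with part (1) serving as a warm-up and part (2) being the substantial claim. For part (1), I would first observe that the ideal $J_n := I_n : x_{1,i+1}^{e_1}\cdots x_{c,i+1}^{e_c}$ fits into an $\Inc^i$-invariant chain by \Cref{colon chain} (applied with $v = x_{1,i+1}^{e_1}\cdots x_{c,i+1}^{e_c}$; note $v\in R_{i+1}$, so I should apply the colon lemma at level $i+1$ after first passing to the $\Inc^{i+1}$-invariant chain, or more simply observe that colon by a monomial in finitely many variables preserves $\Inc^i$-invariance with the same index bound). Then $I_{{\bf e},n}=\langle J_n,x_{1,i+1},\dots,x_{c,i+1}\rangle$, and adding the $\Inc^{i+1}$-fixed variables $x_{1,i+1},\dots,x_{c,i+1}$ preserves $\Inc^{i+1}$-invariance without increasing the index. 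The "$+1$" in the index bound comes from the shift from level $i$ to level $i+1$ — when passing from $\Inc^i$ to $\Inc^{i+1}$ one may lose one step of stability, precisely as in \Cref{lem_initial_filtration}'s reasoning. This part should be routine.

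For part (2), the key structural observation is that $R_{n+1}/I_{{\bf e},n+1}$ is a quotient of $R_{n+1}/\langle x_{1,i+1},\dots,x_{c,i+1}\rangle \cong R_n'$, where $R_n'$ is the polynomial ring in the variables $x_{k,j}$ with $j\ne i+1$ (and $j\le n+1$). Under the isomorphism $R_{n+1}/\langle x_{1,i+1},\dots,x_{c,i+1}\rangle \cong R_n$ induced by relabeling $x_{k,j}\mapsto x_{k,j}$ for $j\le i$ and $x_{k,j}\mapsto x_{k,j-1}$ for $j\ge i+2$ (i.e.\ the inverse of $\sigma_i$ on variables), the image of $I_{{\bf e},n+1}$ is the ideal $\sigma_i^{-1}(J_{n+1})$ of $R_n$ where $J_{n+1} = I_{n+1}:x_{1,i+1}^{e_1}\cdots x_{c,i+1}^{e_c}$. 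So $R_{n+1}/I_{{\bf e},n+1}\cong R_n/\sigma_i^{-1}(J_{n+1})$. Now since $n\ge r\ge\ind^i(\Icc)$, the chain has stabilized, so $I_{n+1}=\langle\Inc^i_{n,n+1}(I_n)\rangle$, and one can check that $\sigma_i^{-1}(J_{n+1}) = \sigma_i^{-1}(I_{n+1}:x_{1,i+1}^{e_1}\cdots x_{c,i+1}^{e_c}) \supseteq I_n$, because $\sigma_i\in\Inc^i_{n,n+1}$ gives $\sigma_i(I_n)\subseteq I_{n+1}$, hence $I_n \subseteq \sigma_i^{-1}(I_{n+1}) \subseteq \sigma_i^{-1}(J_{n+1})$. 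Therefore $R_n/\sigma_i^{-1}(J_{n+1})$ is a quotient of $R_n/I_n$, which gives a surjection $R_n/I_n \twoheadrightarrow R_{n+1}/I_{{\bf e},n+1}$ of graded $K$-algebras.

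With this surjection in hand, the degree-by-degree inequality $\dim_K(R_{n+1}/I_{{\bf e},n+1})_j \le \dim_K(R_n/I_n)_j$ holds for every $j$. To get $q(I_{{\bf e},r+1})\le q(I_r)$ I must compare the $q$-invariants, which are sums over $j$ from $0$ to the respective $\delta$'s. The point is that $\delta(I_{{\bf e},r+1}) \le \max\{\delta(I_r),1\}$: adding the linear forms $x_{k,i+1}$ contributes generators of degree $1$, the colon operation does not increase the top generator degree (colon by a monomial only lowers degrees), and relabeling preserves degrees. One should handle the edge case where $I_r$ has generators only in low degree carefully, but since the extra generators $x_{k,i+1}$ have degree $1 \le \delta(I_{{\bf e},r+1})$, summing $\dim_K(R_{r+1}/I_{{\bf e},r+1})_j$ up to $\delta(I_{{\bf e},r+1})$ is dominated termwise by summing $\dim_K(R_r/I_r)_j$ up to $\delta(I_r)$, using that the summands for $I_r/R_r$ are dropped only where they are at least as large. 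This yields $q(I_{{\bf e},r+1})\le q(I_r)$.

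Finally, the equality characterization: $q(I_{{\bf e},r+1}) = q(I_r)$ forces the surjection $R_r/I_r \twoheadrightarrow R_{r+1}/I_{{\bf e},r+1}$ to be an isomorphism in all degrees up to $\delta(I_r)$, and then a standard argument (a graded surjection of standard-graded algebras that is bijective in degrees $\le\delta$ of the source, where $\delta$ bounds the generator degrees, must be an isomorphism in all degrees) upgrades this to $R_{r+1}/I_{{\bf e},r+1}\cong R_r/I_r$; equivalently $\sigma_i^{-1}(J_{r+1}) = I_r$, i.e.\ $J_{r+1} = \sigma_i(I_r) \cdot R_{r+1}$, which unravels to $I_{{\bf e},r+1} = \langle\sigma_i(I_r),x_{1,i+1},\dots,x_{c,i+1}\rangle$. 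The same conclusion then propagates to all $n\ge r$ because the hypothesis on $r$ applies equally to every such $n$, and the isomorphism class of $R_{n+1}/I_{{\bf e},n+1}$ is forced step by step; I expect the main obstacle to be making the "bijective up to $\delta$ implies iso" step fully rigorous in the non-graded-to-graded passage and pinning down $\delta(I_{{\bf e},r+1})$ precisely enough that the termwise comparison of $q$-invariants is airtight.
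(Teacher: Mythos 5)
The paper does not actually prove this lemma; it cites \cite[Lemma~5.3]{LNNR} together with parts of \cite{NR17}, so there is no in-paper argument to match yours against. That said, the core of your plan — realize $R_{n+1}/I_{{\bf e},n+1}\cong R_n/\psi(I_{n+1}:x_{1,i+1}^{e_1}\cdots x_{c,i+1}^{e_c})$ via the retraction $\psi$ of $\sigma_i$ (what you call $\sigma_i^{-1}$), use $\sigma_i(I_n)\subseteq I_{n+1}$ to get a graded surjection $R_n/I_n\twoheadrightarrow R_{n+1}/I_{{\bf e},n+1}$, and then compare Hilbert functions truncated at generator degrees — is very plausibly the intended one. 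Your treatment of the inequality $q(I_{{\bf e},r+1})\le q(I_r)$ and of the ``equality forces an isomorphism at level $n=r$'' step is essentially sound: one has $\delta(I_{{\bf e},r+1})\le\max\{\delta(I_r),1\}$, so termwise domination forces degreewise equality up to $\delta(I_{{\bf e},r+1})$ and vanishing of $R_r/I_r$ beyond, and since the kernel $\psi(J_{r+1})/I_r$ is generated in degrees $\le\delta(I_{{\bf e},r+1})$, it must be zero. So that is not where the real difficulty lies, contrary to what you anticipate at the end.

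The genuine gap is the passage from $n=r$ to all $n\ge r$ in the equality characterization. Your justification — that ``the hypothesis on $r$ applies equally to every such $n$'' — is not valid: the hypothesis $q(I_{{\bf e},r+1})=q(I_r)$ concerns only level $r$, and there is no a priori reason that $q(I_{{\bf e},n+1})=q(I_n)$ for $n>r$ (in fact $q(I_n)$ typically grows with $n$), so you cannot simply rerun the level-$r$ argument at higher levels. What is actually needed is a stability argument: the auxiliary chain $\Icc'=(I'_n)$ defined by $I'_{n+1}=\langle\sigma_i(I_n),x_{1,i+1},\ldots,x_{c,i+1}\rangle$ is itself $\Inc^{i+1}$-invariant with $\ind^{i+1}(\Icc')\le r+1$, because modulo the variables $x_{k,i+1}$ the $\Inc^{i+1}$-action on $R_{n+1}$ corresponds under $\psi$ to the $\Inc^i$-action on $R_n$; since by part (i) $\ind^{i+1}(\Icc_{\bf e})\le r+1$ as well, and the two chains agree at level $r+1$, they agree at every level $\ge r+1$, which yields both stated conclusions for all $n\ge r$. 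This verification (essentially the content of \cite[Lemma~6.10]{NR17}) is what your proof omits. Two smaller points: the converse implication in (ii) is not addressed at all, although it is quick once one notes $\delta(I_{{\bf e},r+1})=\delta(I_r)$ for proper nonzero $I_r$; and the appeal to \Cref{lem_initial_filtration} in part (i) for the ``$+1$'' is a non sequitur — that lemma concerns initial ideals and says nothing about the passage from $\Inc^i$ to $\Inc^{i+1}$, which is what needs justification there.
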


\begin{proof}
See \cite[Lemma 5.3]{LNNR} (and also the proofs of \cite[Theorem 6.2, Lemma 6.10, Lemma 6.11]{NR17}).
\end{proof}


\section{Codimension up to symmetry}\label{sec3}

Fix a nonnegative integer $i$. From \cite[Theorem 7.10]{NR17} it follows that the codimensions (i.e.\ heights) of graded ideals in an $\Inc^i$-invariant chain grow eventually linearly. 
In this section we extend this result to linearity of the codimension of not necessarily graded ideals in an $\Inc^i$-invariant chain. Moreover, the arguments produce an explicit description for the leading coefficient of the linear function.

We first introduce a function that is used to define that leading coefficient.
Write $[c]=\{1,\dots,c\}$. For a monomial $1 \neq u \in R_n$, let $\min(u)$ (respectively, $\max(u)$) denote the smallest (respectively, largest) index $j$ such that $x_{k,j}$ divides $u$ for some $k\in [c]$. When $J$ is a proper monomial ideal in $R_n$ with minimal set of monomial generators $G(J)$, we set
\begin{align*}
 G_i^+(J)&=\{u\in G(J)\mid \min(u)>i\},\\
 G_i(J)&=\{u\in G(J)\mid \min(u)\le i <\max(u)\},\\
 G_i^-(J)&=\{u\in G(J)\mid \max(u)\le i\}.
\end{align*}

\begin{defn}
	\label{def:i cover}
	Let $C$ be a subset of $[c]$, $u\in R_n$ a monomial, and $J\subsetneq R_n$ a monomial ideal. We say that
	\begin{enumerate}
	 \item
	 $C$ \emph{covers} $u$ if there exists $k\in C$ such that $x_{k,j}$ divides $u$ for some $j\ge 1$,
	
	 \item
	 $C$ is an \emph{$i$-cover} of $J$ if $C$ covers every element of $G_i^+(J)$.
	\end{enumerate}
Let
\[
\gamma_i(J)=\min\{\#C \ | \ C \text{ is an $i$-cover of $J$} \}.
\] 
Note that $0 \le \gamma_i (J) \le c$. If  $J=R_n$, we adopt the convention that
$$
\gamma_i(R_n)=\infty.
$$
\end{defn}

\begin{example}
\label{ex4}
Assume $c\ge 3$ and consider the ideal
\[J=\langle x_{2,1}^4,\; x_{1,1}^3x_{2,3}^2x_{1,4},\; x_{3,2}x_{1,3}^2x_{2,4},\; x_{2,3}^3x_{1,4}^2,\; x_{2,4}^2x_{3,5}^4\rangle \subset R_6.
\]
Then $G_2^-(J)=\{x_{2,1}^4\}$, $G_2(J)=\{x_{1,1}^3x_{2,3}^2x_{1,4},\; x_{3,2}x_{1,3}^2x_{2,4}\}$, $G_2^+(J)=\{x_{2,3}^3x_{1,4}^2,\; x_{2,4}^2x_{3,5}^4\}$.
One sees that $J$ has two minimal 2-covers: $C_1=\{2\}$ and $C_2=\{1,3\}$.
Thus, 
\[
\gamma_2(J)=1.
\]
\end{example}

Let us now discuss some basic properties of the function $\gamma_i$. For a monomial ideal $J$ we first show that $\gamma_i(J)$ can be computed from a primary decomposition of the ideal $\langle G_i^+(J)\rangle$ (or more efficiently, from the minimal primes of $\langle G_i^+(J)\rangle$). Consider the map $\varphi$ that assigns the variable $x_{k,j}$ to $k$ for every $j\ge 1$. Then $\varphi$ clearly induces a map, still denoted by $\varphi$, from the set $\Min(\langle G_i^+(J)\rangle)$ of minimal primes of $\langle G_i^+(J)\rangle$ to the set of $i$-covers of $J$. Let $\mathcal{C}_i(J)$ be the image of this map, i.e.
\[
\mathcal{C}_i(J)=\{\varphi(P)\mid P\in \Min(\langle G_i^+(J)\rangle)\}.
\]

\begin{prop}
\label{min_primes}
	Let $J\subsetneq R_n$ be a monomial ideal. If $C$ is a minimal $i$-cover of $J$, then $C\in\mathcal{C}_i(J)$. Furthermore,
	\[
	\gamma_i(J)=\min\{\#C\mid C\in \mathcal{C}_i(J)\}=\min\{\#\varphi(P)\mid P\in \Min(\langle G_i^+(J)\rangle)\}.
	\]
\end{prop}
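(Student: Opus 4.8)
The plan is to show the two displayed equalities by relating minimal $i$-covers of $J$ to minimal primes of the monomial ideal $L := \langle G_i^+(J)\rangle$. Recall that $G_i^+(J)$ consists of exactly those minimal generators $u$ of $J$ with $\min(u) > i$, and that a set $C \subseteq [c]$ is an $i$-cover of $J$ precisely when $C$ covers every $u \in G_i^+(J)$, i.e.\ for each such $u$ there is $k \in C$ with some variable $x_{k,j}$ dividing $u$. The key observation is that this is the same as saying that the monomial prime ideal $P_C := \langle x_{k,j} \mid k \in C,\ j \ge 1\rangle$ contains every $u \in G_i^+(J)$, hence contains $L$. So $C$ is an $i$-cover of $J$ if and only if $P_C \supseteq L$, and $\varphi(P_C) = C$ by the definition of $\varphi$.

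First I would prove the first assertion: if $C$ is a minimal $i$-cover of $J$, then $C \in \mathcal{C}_i(J)$. Since $C$ is an $i$-cover, $P_C \supseteq L$, so $P_C$ contains some minimal prime $P$ of $L$. Every minimal prime of the monomial ideal $L$ is itself a monomial prime generated by variables; moreover, because the generators of $L$ lie in the subring $K[x_{k,j} \mid 1 \le k \le c,\ i+1 \le j \le n]$ and (more to the point) $L$ is invariant under permuting the ``$j$''-index in the sense that if $u \in L$ divides-pattern-wise, the relevant structure only sees which $k$'s appear — one checks that any minimal prime $P$ of $L$ is of the form $P = P_{C'}$ restricted appropriately, with $C' = \varphi(P) \subseteq C$. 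Then $P_{C'} \supseteq P \supseteq L$ shows $C'$ is again an $i$-cover of $J$, and minimality of $C$ forces $C' = C$, i.e.\ $C = \varphi(P) \in \mathcal{C}_i(J)$. I would make the ``minimal primes of $L$ come from variable subsets $C' \subseteq C$'' step precise by the standard fact that for a squarefree-in-the-$k$-pattern monomial ideal the minimal primes are obtained by choosing, for each generator, one variable to include, and that minimality lets us keep only those $k$-indices actually needed — so $\varphi(P) \subseteq C$ for the minimal prime $P$ sitting below $P_C$.

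For the equalities: $\gamma_i(J) = \min\{\#C \mid C \text{ a minimal } i\text{-cover}\}$ by definition (any $i$-cover contains a minimal one of no larger size). By the first assertion every minimal $i$-cover lies in $\mathcal{C}_i(J)$, so $\min\{\#C \mid C \in \mathcal{C}_i(J)\} \le \gamma_i(J)$. Conversely, every $C \in \mathcal{C}_i(J)$ is $\varphi(P)$ for a minimal prime $P = P_{C}$ of $L$, and then $P_C \supseteq L$ means $C$ covers every element of $G_i^+(J)$, so $C$ is an $i$-cover and $\#C \ge \gamma_i(J)$; this gives the reverse inequality, hence the first equality. The second equality $\min\{\#C \mid C \in \mathcal{C}_i(J)\} = \min\{\#\varphi(P) \mid P \in \Min(L)\}$ is immediate from the definition of $\mathcal{C}_i(J)$ as the image $\{\varphi(P) \mid P \in \Min(L)\}$. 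When $J = R_n$ one has $G_i^+(J) = \emptyset$ or $1 \in G(J)$; the convention $\gamma_i(R_n) = \infty$ handles that boundary case separately.

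The main obstacle is pinning down the precise correspondence between minimal primes of $L = \langle G_i^+(J)\rangle$ and the sets $\varphi(P) \subseteq [c]$ — specifically, verifying that for a minimal prime $P$ of $L$ one genuinely has $P = P_{\varphi(P)} \cap R_n$ (so that knowing $\varphi(P)$ recovers enough to conclude it is an $i$-cover), and that no ``spurious'' minimal prime with extra or missing variables slips in. This is a routine but slightly delicate combinatorial fact about monomial primary decomposition once phrased correctly; I would isolate it as the one lemma-level step and prove it by the standard generator-by-generator selection argument for minimal primes of monomial ideals.
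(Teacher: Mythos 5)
Your core argument is correct and follows essentially the same route as the paper: given a minimal $i$-cover $C$, exhibit a monomial prime containing $L=\langle G_i^+(J)\rangle$ whose $\varphi$-image lies in $C$, descend to a minimal prime $P$ of $L$, and invoke minimality of $C$ to conclude $C=\varphi(P)\in\mathcal{C}_i(J)$. The paper builds a \emph{small} such prime $Q=\langle x_{k(u),j(u)}\mid u\in G_i^+(J)\rangle$ by selecting one variable per generator; you instead use the \emph{maximal} candidate $P_C=\langle x_{k,j}\mid k\in C\rangle$, observing that $C$ is an $i$-cover if and only if $P_C\supseteq L$. Both yield a minimal prime $P\subseteq P_C$, so $\varphi(P)\subseteq C$; the rest is identical.

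One correction, however: the ``main obstacle'' you flag at the end — that one must verify $P = P_{\varphi(P)}\cap R_n$ — is both unnecessary and in general false (e.g.\ for $L=\langle x_{1,1}x_{1,2}\rangle$ the minimal prime $P=\langle x_{1,1}\rangle$ is a proper subideal of $\langle x_{1,1},x_{1,2},\dots\rangle$). All you actually need is the trivial containment $P\subseteq P_{\varphi(P)}$, which together with $P\supseteq L$ shows $\varphi(P)$ is an $i$-cover; you already state exactly this containment in the body of your argument, so the worry in the final paragraph dissolves. Likewise the aside about $L$ being ``invariant under permuting the $j$-index'' is incorrect as stated (an arbitrary monomial ideal has no such symmetry) but, fortunately, it plays no role: the only fact you use is that minimal primes of a monomial ideal are generated by variables, which is standard.
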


\begin{proof}
	The first assertion implies the claimed formula for $\gamma_i(J)$ because it says that $\mathcal{C}_i(J)$, which is a subset of the set of $i$-covers of $J$, contains all minimal $i$-covers. So it suffices to prove this assertion. Suppose $C$ is a minimal $i$-cover of $J$. Then for each $u\in G_i^+(J)$ there exist $k(u)\in C$ and $j(u)\ge 1$ such that $x_{k(u),j(u)}$ divides $u$. Set
	\[
	Q=\langle x_{k(u),j(u)}\mid u\in G_i^+(J)\rangle.
	\]
	Then $Q$ is evidently a prime ideal containing $\langle G_i^+(J)\rangle$. It follows that $Q\supseteq P$ for some $P\in\Min(\langle G_i^+(J)\rangle)$. One has
	\[
	C\supseteq \varphi(Q)\supseteq \varphi(P).
	\]
	Due to the minimality of $C$, this yields $C=\varphi(P)\in\mathcal{C}_i(J)$, because $C$ and $\varphi(P)$ are both $i$-covers of $J$.
\end{proof}

\begin{example}
 Consider again the ideal $J$ in \Cref{ex4}. The set of minimal primes of the ideal $\langle G_2^+(J)\rangle=\langle x_{2,3}^3x_{1,4}^2,\; x_{2,4}^2x_{3,5}^4\rangle$ is
 \[
  \Min(\langle G_2^+(J)\rangle)=\{\langle x_{2,3},\; x_{3,5}\rangle, \langle x_{2,3},\; x_{2,4}\rangle,\langle x_{1,4},\; x_{2,4}\rangle,\langle x_{1,4},\; x_{3,5}\rangle\}.
 \]
Thus,
\[
 \mathcal{C}_2(J)=\{\{2,3\},\{2\},\{1,2\},\{1,3\}\},
\]
and again we find that 
\[
\gamma_2(J)=\min\{\#C\mid C\in \mathcal{C}_2(J)\}=1.
\]
\end{example}

Some further properties of the function $\gamma_i$ are given in the following lemmas.

\begin{lem}
 \label{i cover properties}
 Let $C\subseteq [c]$, let $u,v\in R_n$ be monomials with $u|v$, and let $J\subseteq J'\subsetneq R_n$ be monomial ideals. Then the following statements hold:

\begin{enumerate}
 \item
 If $C$ covers $u$, then $C$ also covers $v$.

 \item
 If $C$ is an $i$-cover of $J'$, then $C$ is also an $i$-cover of $J$.

 \item
 $\gamma_i(J)\le \gamma_i(J')$.

 \item
 $\gamma_i(J)=\gamma_i(\sqrt{J})$.
\end{enumerate}
\end{lem}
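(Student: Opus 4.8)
The statement collects four elementary facts, so the plan is to verify them in order, each building on the previous. For part (i): if $C$ covers $u$, then some $x_{k,j}$ with $k \in C$ divides $u$; since $u \mid v$, that same $x_{k,j}$ divides $v$, so $C$ covers $v$. This is immediate from the definition of "covers" in \Cref{def:i cover}.

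For part (ii), the key observation is how the minimal generating sets interact under inclusion $J \subseteq J'$. I would argue that every $u \in G_i^+(J)$ lies in $J'$, hence is divisible by some $w \in G(J')$; moreover $\min(w) \ge \min(u) > i$ (since $w \mid u$ and any variable dividing $w$ divides $u$), so $w \in G_i^+(J')$. Since $C$ covers $w$ by hypothesis and $w \mid u$, part (i) gives that $C$ covers $u$. As $u$ was an arbitrary element of $G_i^+(J)$, $C$ is an $i$-cover of $J$. Part (iii) is then formal: any $i$-cover of $J'$ is an $i$-cover of $J$ by (ii), so the minimum defining $\gamma_i(J)$ is taken over a set containing all the $i$-covers of $J'$, giving $\gamma_i(J) \le \gamma_i(J')$. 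One should also note the degenerate case $J' = R_n$ separately, where $\gamma_i(J') = \infty$ by convention and the inequality is trivial; if $J = R_n$ as well there is nothing to prove, and if $J \subsetneq R_n = J'$ the inequality $\gamma_i(J) \le \infty$ holds.

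For part (iv), the equality $\gamma_i(J) = \gamma_i(\sqrt{J})$, I plan to use the radical characterization together with \Cref{min_primes}. Since $J \subseteq \sqrt{J}$, part (iii) gives $\gamma_i(J) \le \gamma_i(\sqrt{J})$. For the reverse, the cleanest route is to observe that $\langle G_i^+(J)\rangle$ and $\langle G_i^+(\sqrt{J})\rangle$ have the same radical — indeed, each generator of one divides a power of a generator of the other, since passing to the radical of a monomial ideal only replaces each generator by its squarefree part and does not change which variables appear — so they have the same set of minimal primes, hence the same set $\mathcal{C}_i$, and \Cref{min_primes} yields equal $\gamma_i$. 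The one point requiring a little care is checking that $G_i^+(\sqrt J)$ really consists of the squarefree parts of elements of $G_i^+(J)$ (and not of generators with $\min \le i$): this follows because taking squarefree parts does not change $\min$ of a monomial, so the partition of $G(\sqrt J)$ into the three pieces $G_i^-, G_i, G_i^+$ matches the partition of $G(J)$. I expect this last verification — matching the $G_i^+$ pieces under passage to the radical — to be the only step with any content; everything else is unwinding definitions.
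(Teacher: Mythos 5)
Your treatments of (i), (ii), and (iii) match the paper's proof: (i) is definitional, (ii) follows from the observation that every $u\in G_i^+(J)$ is divisible by some $w\in G_i^+(J')$ (your extra verification that $\min(w)\ge\min(u)>i$ because $w\mid u$ is exactly the point that makes this work), and (iii) is formal from (ii).

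For (iv) you take the alternative route the paper itself flags in a closing remark — deducing $\gamma_i(J)=\gamma_i(\sqrt J)$ from $\sqrt{\langle G_i^+(J)\rangle}=\langle G_i^+(\sqrt J)\rangle$ together with \Cref{min_primes}. The conclusion is right, but your stated reason, ``each generator of one divides a power of a generator of the other,'' has the divisibility running in the wrong direction and is false in general. Take $c=1$, $i=1$, $J=\langle x_{1,2}^2,\;x_{1,2}x_{1,3}\rangle$: then $G_1^+(J)=\{x_{1,2}^2,\;x_{1,2}x_{1,3}\}$ while $\sqrt J=\langle x_{1,2}\rangle$ so $G_1^+(\sqrt J)=\{x_{1,2}\}$, and $x_{1,2}x_{1,3}$ divides no power of $x_{1,2}$. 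What is actually true, and what gives the radical equality, is the pair of statements: every $u\in G_i^+(J)$ is \emph{divisible by} some $w\in G_i^+(\sqrt J)$ (since $u\in\sqrt J$ and any $w\mid u$ in $G(\sqrt J)$ has $\min(w)\ge\min(u)>i$), and every $w\in G_i^+(\sqrt J)$ has some power $w^k$ \emph{divisible by} an element of $G_i^+(J)$ (since $w^k\in J$ and any $u\mid w^k$ in $G(J)$ has $\min(u)\ge\min(w)>i$). This yields $\langle G_i^+(J)\rangle\subseteq\langle G_i^+(\sqrt J)\rangle\subseteq\sqrt{\langle G_i^+(J)\rangle}$, and since $\langle G_i^+(\sqrt J)\rangle$ is a squarefree monomial ideal and hence radical, all three have the same radical and the same minimal primes, so $\mathcal{C}_i(J)=\mathcal{C}_i(\sqrt J)$ and the equality follows from \Cref{min_primes}. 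The paper's primary proof of (iv) is shorter and avoids \Cref{min_primes} entirely: by (iii) one has $\gamma_i(J)\le\gamma_i(\sqrt J)$, and conversely any $i$-cover $C$ of $J$ is also one of $\sqrt J$, because for $v\in G_i^+(\sqrt J)$ some power $v^k$ is divisible by a $u\in G_i^+(J)$ covered by $C$, whence $C$ covers $v^k$ and (since $v$ and $v^k$ have the same support) also $v$.
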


\begin{proof}
(i) follows immediately from \Cref{def:i cover}(i). Since $J\subseteq J'$, every element of $G_i^+(J)$ is divisible by some element of $G_i^+(J')$. So (ii) is a consequence of (i). From (ii) we get (iii). For (iv) it suffices to show that any $i$-cover $C$ of $J$ is also an $i$-cover of $\sqrt{J}$. Let $v\in G_i^+(\sqrt{J})$. Then $v^k$ is divisible by an element $u\in G_i^+(J)$ for some $k\ge 1$. Since $C$ covers $u$, it also covers $v^k$. Hence, $C$ covers $v$, as desired. Note that one can also prove (iv) by using \Cref{min_primes} and the fact that $\langle G_i^+(\sqrt{J})\rangle=\sqrt{\langle G_i^+(J)}\rangle.$
\end{proof}

\begin{lem}
 \label{i cover chain}
 Let $\Icc=(I_n)_{n\ge 1}$ be an $\Inc^i$-invariant chain of monomial ideals. Then
 \[
  \gamma_i(I_n)=\gamma_i(I_{n+1})\quad\text{for all }\  n\ge \ind^i(\Icc).
 \]
\end{lem}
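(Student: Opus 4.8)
The plan is to prove separately the two inequalities $\gamma_i(I_n)\le\gamma_i(I_{n+1})$ and $\gamma_i(I_{n+1})\le\gamma_i(I_n)$ for $n\ge r:=\ind^i(\Icc)$. The guiding principle is that $\gamma_i(J)$ is governed entirely by which subsets $C\subseteq[c]$ cover the monomials in $G_i^+(J)$, and that elements of $\Inc^i$ act only on the second index of a variable, hence preserve covering; combined with the stabilization property from \Cref{sec2}, which for $n\ge r$ describes $I_{n+1}$ explicitly in terms of $I_n$, this should force the two generator sets $G_i^+(I_n)$ and $G_i^+(I_{n+1})$ to have the same minimal covers.

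First I would record two elementary observations. (1) For any monomial $u$ and any $\pi\in\Inc^i$, the monomial $\pi(u)$ involves exactly the same first indices $k$ as $u$, so for every $C\subseteq[c]$, $C$ covers $u$ if and only if $C$ covers $\pi(u)$. (2) Since $\pi\in\Inc^i$ fixes $1,\dots,i$ and is strictly increasing (hence maps $\{i+1,i+2,\dots\}$ into itself), one has $\min(\pi(u))=\pi(\min(u))$, and $\min(\pi(u))>i$ if and only if $\min(u)>i$; in particular, if a monomial $v$ with $\min(v)>i$ is divisible by $\pi(u)$, then $\min(u)>i$. I would also dispose of the degenerate case: if $I_n=R_n$ then $I_{n+1}=R_{n+1}$ (no $\pi(u)$ with $u\ne1$ is a unit), so both $\gamma_i$'s equal $\infty$; hence we may assume $I_n\subsetneq R_n$, and then $I_{n+1}\subsetneq R_{n+1}$ as well.

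For $\gamma_i(I_{n+1})\le\gamma_i(I_n)$ I would take a minimal $i$-cover $C$ of $I_n$, so $\#C=\gamma_i(I_n)$, and show $C$ is an $i$-cover of $I_{n+1}$. Let $v\in G_i^+(I_{n+1})$. Because $n\ge r$, the ideal $I_{n+1}=\langle\pi(u)\mid u\in G(I_n),\ \pi\in\Inc^i_{n,n+1}\rangle_{R_{n+1}}$ is monomial, so $v$ is divisible by some $\pi(u)$ with $u\in G(I_n)$, $\pi\in\Inc^i_{n,n+1}$. By observation (2), $\min(v)>i$ forces $\min(u)>i$, i.e.\ $u\in G_i^+(I_n)$, so $C$ covers $u$; by (1), $C$ covers $\pi(u)$; by \Cref{i cover properties}(i), $C$ covers $v$. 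For the reverse inequality, take a minimal $i$-cover $C$ of $I_{n+1}$ and let $u\in G_i^+(I_n)$. Then $u\in I_n\subseteq I_{n+1}$, so $u$ is divisible by some $v\in G(I_{n+1})$; since $\min(v)\ge\min(u)>i$ we get $v\in G_i^+(I_{n+1})$, so $C$ covers $v$, hence $C$ covers $u$ by \Cref{i cover properties}(i). Thus $C$ is an $i$-cover of $I_n$, and the two inequalities together give $\gamma_i(I_n)=\gamma_i(I_{n+1})$.

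The main (and essentially only) obstacle is invoking the explicit generating set of $I_{n+1}$ correctly, which is exactly where the hypothesis $n\ge\ind^i(\Icc)$ is used; once that description is available, the rest reduces to the fact that $\Inc^i$ acts only on second indices, together with \Cref{i cover properties}. One could alternatively route both steps through \Cref{min_primes}, comparing $\Min(\langle G_i^+(I_n)\rangle)$ with $\Min(\langle G_i^+(I_{n+1})\rangle)$, but the direct covering argument above seems shortest.
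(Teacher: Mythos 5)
Your proposal is correct and takes essentially the same approach as the paper: the inequality $\gamma_i(I_n)\le\gamma_i(I_{n+1})$ comes from the containment $I_n R_{n+1}\subseteq I_{n+1}$ (which the paper handles by citing \Cref{i cover properties}(iii), while you reprove it directly), and the reverse inequality comes from the stabilization $I_{n+1}=\langle\Inc^i_{n,n+1}(I_n)\rangle$ plus the observation that $\Inc^i$ leaves first indices unchanged so covers are preserved.
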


\begin{proof} 
It suffices to consider proper ideals. 
We have $\gamma_i(I_n)\le\gamma_i(I_{n+1})$ for all $n\ge 1$ by \Cref{i cover properties}(iii). For $n\ge \ind^i(\Icc)$ one has
$
I_{n+1}=\langle\Inc^i_{n,n+1}(I_n)\rangle,
$
which implies
\[
  G_i^+(I_{n+1})\subseteq \Inc^i_{n,n+1}(G_i^+(I_n)).
\]
 It follows that any $i$-cover of $I_n$ is also an $i$-cover of $I_{n+1}$, because the action of $\Inc^i$ keeps the first index of the variables unchanged. Therefore, $\gamma_i(I_n)\ge\gamma_i(I_{n+1})$, and hence $\gamma_i(I_n)=\gamma_i(I_{n+1})$.
\end{proof}

We set
\[
 \gamma_i(\Icc)= \gamma_i(I_n)\quad\text{for some }\  n\ge \ind^i(\Icc). 
\]
This is well-defined by \Cref{i cover chain}, and moreover, according to \Cref{min_primes}, $\gamma_i(\Icc)$ can be determined by the minimal primes of the ideal $\langle G_i^+(I_n)\rangle$ for any $n\ge \ind^i(\Icc)$.

For convenience in stating and proving the next result we will make use of the following convention:

\begin{convention}
\label{conven}
 The codimension of the unit ideal in the ring $R_n$ is set to be $\infty$ for every $n\ge1$.
\end{convention}

The main result of this section is:

\begin{thm}
 \label{codim}
 Let $\Icc=(I_n)_{n\ge 1}$ be an $\Inc^i$-invariant chain of monomial ideals. Then there exists an integer $D(\Icc)$ such that
 \[
  \codim I_n=\gamma_i(\Icc)n + D(\Icc) \quad \text{for }\ n\gg 0.
 \]
\end{thm}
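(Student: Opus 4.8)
The plan is to reduce the general (monomial) case to the graded case already known from \cite[Theorem 7.10]{NR17}, and then to pin down the leading coefficient via the combinatorial invariant $\gamma_i$. First I would reduce to proper ideals and to the stable range: by \Cref{conven} the claim is trivially handled when $I_n = R_n$ for $n \gg 0$ (both sides are $\infty$, and $\gamma_i(\Icc) = \infty$), and otherwise $I_n \subsetneq R_n$ for all $n \ge \ind^i(\Icc)$, so we may assume $\Icc$ consists of proper monomial ideals. The first genuine step is to pass from arbitrary monomial ideals to radical (in fact squarefree) ones: since $\codim I_n = \codim \sqrt{I_n} = \dim R_n - \dim R_n/\sqrt{I_n}$ and, by \Cref{i cover properties}(iv), $\gamma_i(I_n) = \gamma_i(\sqrt{I_n})$, it suffices to prove the statement for the $\Inc^i$-invariant chain $(\sqrt{I_n})_{n\ge 1}$. (One should check this chain is again $\Inc^i$-invariant and that its stability index is controlled; this follows because $\sqrt{\langle \Inc^i_{m,n}(I_m)\rangle} = \langle \Inc^i_{m,n}(\sqrt{I_m})\rangle$ up to radical, or more carefully by a direct argument.) So from now on assume each $I_n$ is squarefree.

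Next I would analyze the minimal primes of $I_n$ directly in terms of the three generator groups $G_i^-(I_n)$, $G_i(I_n)$, $G_i^+(I_n)$. Fix $r = \ind^i(\Icc)$ and work with $n \ge r$, so $I_n = \langle \Inc^i_{r,n}(I_r)\rangle$. A minimal prime $P$ of $I_n$ is generated by variables, and to contain $I_n$ it must, for each generator $u$, contain at least one variable dividing $u$. The key structural observation is a \emph{separation of scales}: the generators in $G_i^+(I_n)$ — those supported entirely on indices $> i$ — are, after applying $\Inc^i$, spread out across the "growing" part of the ring, and covering all of them costs, variable-column by variable-column, essentially $\gamma_i(\Icc)$ variables per "slot" of the first index; whereas $G_i^-(I_n)$ and $G_i(I_n)$ live in a bounded region (their supports meeting $\{1,\dots,i\}$) and contribute only a bounded correction. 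Concretely, I would show: (a) there is a minimal prime of $I_n$ using exactly $\gamma_i(\Icc)\cdot(\text{number of relevant }j) + O(1)$ variables, obtained by choosing a minimal $i$-cover $C$ of $I_r$ (equivalently a minimal prime of $\langle G_i^+(I_r)\rangle$ via \Cref{min_primes}), taking the variables $x_{k,j}$ for $k \in C$ and $j$ ranging over the relevant window, and adjoining a bounded set of variables to kill $G_i(I_n) \cup G_i^-(I_n)$; and (b) no minimal prime can do better, because any prime containing $I_n$ must in particular contain $\langle \Inc^i_{r,n}(G_i^+(I_r))\rangle$, and a counting/pigeonhole argument on the columns $k \in [c]$ forces it to use at least $\gamma_i(\Icc)$ variables $x_{k,j}$ for each value of $j$ appearing in the support of $\Inc^i_{r,n}(G_i^+(I_r))$, of which there are $n - r + O(1)$.

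It is cleaner to organize step (b) inductively using \Cref{colon chain} and \Cref{e chain}: iterating the colon-and-quotient operation $I \mapsto \langle I : x_{1,i+1}^{e_1}\cdots, x_{1,i+1},\dots,x_{c,i+1}\rangle$ shifts the index window by one and, by \Cref{e chain}(1), produces an $\Inc^{i+1}$-invariant chain with controlled stability index; tracking how $\codim$ changes under one such step — it decreases by exactly $c - \gamma_i(\Icc)$ when things are "generic" and one is forced to pick up the full cover otherwise — should give the recursion $\codim I_{n+1} - \codim I_n = \gamma_i(\Icc)$ for $n \gg 0$, whence $\codim I_n = \gamma_i(\Icc)n + D(\Icc)$ with $D(\Icc)$ the (eventually constant) intercept. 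Summing this telescoping relation from the first $n$ in the stable-difference range gives the theorem; and combining with the graded case of \cite[Theorem 7.10]{NR17} (applied after polarization, or to a suitable generic initial ideal) as a consistency check confirms that the slope there must equal $\gamma_i(\Icc)$.

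The main obstacle I anticipate is step (b): proving the sharp \emph{lower} bound $\codim I_n \ge \gamma_i(\Icc) n + O(1)$. It is easy to see that every minimal prime is "large," but showing it is at least $\gamma_i(\Icc)$ per index-slot requires understanding precisely how $\Inc^i$ replicates the generators of $G_i^+(I_r)$ across disjoint blocks of variables and why a prime cannot economize by reusing a single variable to cover generators from different blocks — this is exactly where $i$-stability (every $I_n$ for $n \ge r$ is generated from $I_r$) and the fact that $\Inc^i$ preserves the first index of each variable (as used in the proof of \Cref{i cover chain}) must be leveraged carefully. The bounded correction terms from $G_i(I_n)$ and $G_i^-(I_n)$, and the behavior right at the boundary of the index window, are the fiddly parts that determine $D(\Icc)$ but do not affect the slope.
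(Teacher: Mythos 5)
Your reduction to squarefree ideals and the high-level goal (show $\codim I_{n+1}-\codim I_n$ eventually equals $\gamma_i(\Icc)$) match the paper, and you correctly flag the colon-and-quotient operation of \Cref{e chain} as relevant. But there is a genuine gap precisely where you anticipate it, in your step (b): you do not give a proof of the lower bound $\codim I_n\ge \gamma_i(\Icc)\,n+O(1)$, and the ``counting/pigeonhole on columns'' you propose does not obviously work. The difficulty is that a minimal prime of $I_n$ is under no obligation to be uniform across column indices $j$: it can use different first indices $k$ to cover translates of the same generator at different positions $j$, and the generators in $G_i(I_r)$ (those straddling the index $i$) interact with the $\Inc^i$-action in a way that is not captured by looking at $G_i^+(I_r)$ alone. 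Simply invoking ``$\Inc^i$ preserves the first index'' is not enough to force $\gamma_i(\Icc)$ variables per column.

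The paper avoids this direct analysis entirely. Its proof runs a double induction keyed to the $q$-invariant $q(I_r)=\sum_{j\le\delta(I_r)}\dim_K(R_r/I_r)_j$, applying the inductive hypothesis to the triples $(i+1,r+1,\Icc_{\bf e})$ for ${\bf e}\in\{0,1\}^c$, which strictly decrease $q$ when ${\bf e}\in E_1$. The crucial structural input you are missing is \Cref{linearform_manysteps}(v), the identity
\[
\gamma_i(\Icc)=\min\bigl\{\min\{\gamma_{i+1}(\Icc_{\bf e})\mid{\bf e}\in E_1\},\ \min\{c-|{\bf e}|\mid{\bf e}\in E_2\}\bigr\},
\]
together with \Cref{linearform_manysteps}(ii), the decomposition $\codim I_n=\min\{\codim I_{{\bf e},n}-|{\bf e}|\mid{\bf e}\in\{0,1\}^c\}$ for squarefree $I_n$. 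Feeding the inductive linear formulas for $\codim I_{{\bf e},n}$ (${\bf e}\in E_1$) into this min, and using $\codim I_{{\bf e},n}=\codim I_{n-1}+c$ for ${\bf e}\in E_2$, gives both the lower bound $\codim I_{n+1}\ge\codim I_n+\gamma_i(\Icc)$ and the upper bound $\codim I_{n+1}\le\codim I_n+\min\{c-|{\bf e}|\mid{\bf e}\in E_2\}$; the remaining case $\gamma_i(\Icc)<\min\{c-|{\bf e}|\mid{\bf e}\in E_2\}$ is closed by a contradiction argument comparing against $\codim I_{{\bf e}_0,n}$ for a suitable ${\bf e}_0\in E_1$ achieving the min. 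None of this machinery -- the $q$-invariant, the partition $E_1\cup E_2$, the recursive identity for $\gamma_i$, or the final squeezing argument -- appears in your proposal; without it, your plan establishes at best an upper bound on the slope, not the matching lower bound that is the theorem's real content.
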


The argument requires some further preparations. The following observation says that it suffices to prove the theorem for chains of squarefree monomial ideals.

\begin{lem}
 \label{radical}
 Let $\Icc=(I_n)_{n\ge 1}$ be an $\Inc^i$-invariant chain of monomial ideals. Then the chain $\sqrt{\Icc}=(\sqrt{I_n})_{n\ge 1}$ is also $\Inc^i$-invariant with
 $$
  \gamma_i(\sqrt{\Icc})=\gamma_i(\Icc).
 $$
\end{lem}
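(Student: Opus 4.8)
The plan is to verify the two assertions separately and then combine them. For $\Inc^i$-invariance of $\sqrt{\Icc}=(\sqrt{I_n})_{n\ge 1}$, I would fix $m\le n$ and $\pi\in\Inc^i_{m,n}$, pick $f\in\sqrt{I_m}$, and use the fact (from \Cref{sec2}) that any such $\pi$ agrees on $R_m$ with a permutation $\sigma\in\Sym(n)$ up to the action on $R$; equivalently, $\pi$ acts as a $K$-algebra embedding $R_m\hookrightarrow R_n$ sending variables to variables. Then $f^k\in I_m$ for some $k\ge 1$, so $(\pi f)^k=\pi(f^k)\in\pi(I_m)\subseteq I_n$ (the last inclusion by $\Inc^i$-invariance of $\Icc$), hence $\pi f\in\sqrt{I_n}$. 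Thus $\Inc^i_{m,n}(\sqrt{I_m})\subseteq\sqrt{I_n}$, as required. Since radicals of monomial ideals are monomial, $\sqrt{\Icc}$ is again a chain of monomial ideals.

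For the second assertion $\gamma_i(\sqrt{\Icc})=\gamma_i(\Icc)$, I would first record that by \Cref{lem_initial_filtration}-type reasoning (or directly from the displayed stabilization criterion in \Cref{sec2}) the stability index of $\sqrt{\Icc}$ is controlled — more precisely, I would argue that for $n$ large, both $I_n$ and $\sqrt{I_n}$ are determined by their predecessors under $\Inc^i$, so both $\gamma_i(\Icc)$ and $\gamma_i(\sqrt{\Icc})$ are well-defined via \Cref{i cover chain}. It then suffices to show $\gamma_i(\sqrt{I_n})=\gamma_i(I_n)$ for a single sufficiently large $n$; but in fact this equality holds for \emph{every} monomial ideal $J\subsetneq R_n$ by \Cref{i cover properties}(iv), which states $\gamma_i(J)=\gamma_i(\sqrt{J})$. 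So I would simply invoke \Cref{i cover properties}(iv) with $J=I_n$ for any $n\ge\max\{\ind^i(\Icc),\ind^i(\sqrt{\Icc})\}$, obtaining $\gamma_i(\sqrt{\Icc})=\gamma_i(\sqrt{I_n})=\gamma_i(I_n)=\gamma_i(\Icc)$.

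The only genuinely delicate point is making sure $\gamma_i(\sqrt{\Icc})$ is well-defined at all — that is, that $\sqrt{\Icc}$ stabilizes so that \Cref{i cover chain} applies to it. This follows from the Hillar–Sullivant theorem cited in \Cref{sec2}, which guarantees every $\Inc^i$-invariant chain stabilizes; since we have just shown $\sqrt{\Icc}$ is $\Inc^i$-invariant, it stabilizes, and $\ind^i(\sqrt{\Icc})$ exists. (One could also note directly that for $n\ge\ind^i(\Icc)$ one has $\sqrt{I_n}\supseteq\langle\Inc^i_{n,n+1}(\sqrt{I_n})\rangle$, and radicals of the stabilized chain stabilize, but appealing to Hillar–Sullivant is cleanest.) Once both $\gamma$-invariants are defined via a common large $n$, the equality is immediate from \Cref{i cover properties}(iv), so there is essentially no computational obstacle here — the lemma is a bookkeeping step that lets \Cref{codim} be reduced to the squarefree case.
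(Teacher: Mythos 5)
Your proof is correct and takes essentially the same route as the paper's: you establish $\Inc^i$-invariance of $\sqrt{\Icc}$ by pushing powers of elements through the $K$-algebra maps induced by $\pi\in\Inc^i_{m,n}$, and then the equality $\gamma_i(\sqrt{\Icc})=\gamma_i(\Icc)$ follows from \Cref{i cover properties}(iv) applied to $I_n$ for $n$ large. Your extra care in checking that $\gamma_i(\sqrt{\Icc})$ is well-defined (via Hillar--Sullivant stabilization of the now-established $\Inc^i$-invariant chain $\sqrt{\Icc}$) is left implicit in the paper but is the right point to flag.
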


\begin{proof} 
Again it suffices to consider proper ideals.  
 Let $n\ge m\ge1$, $\pi\in \Inc^i_{m,n}$, and consider any monomial  $u\in \sqrt{I_m}$. Let $k\ge1$ be such that $u^k\in I_m$. Then
 \[
  \pi(u)^k=\pi(u^k)\in\pi(I_m)\subseteq I_n.
 \]
Thus, $\pi(u)\in \sqrt{I_n}$, and so the chain $\sqrt{\Icc}$ is $\Inc^i$-invariant. The equality \[\gamma_i(\sqrt{\Icc})=\gamma_i(\Icc)\] 
follows from \Cref{i cover properties}(iv).
\end{proof}

\begin{lem}
	\label{variable}
	Let $J \subseteq R_n$ be a squarefree monomial ideal and $x$ a variable of $R_n$. Then
	\[
	\codim J =\min \{\codim \langle(J : x),x\rangle -  1,\; \codim \langle J, x\rangle\}.
	\]
\end{lem}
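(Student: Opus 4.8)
The plan is to exploit the short exact sequence relating $J$, $\langle J,x\rangle$, and the colon ideal $\langle J:x\rangle$. Since $J$ is squarefree, $x$ does not divide any minimal generator of $J$, so we may write $J = \langle J_1, xJ_2\rangle$ where $J_1, J_2$ are squarefree monomial ideals in the polynomial ring $R'$ obtained from $R_n$ by deleting the variable $x$; here $J_1 = \langle G(J)\cap R'\rangle$ collects the generators not involving $x$ and $J_2$ collects the $x$-free parts of those that do. Then $J:x = J_1 + J_2$ (extended to $R_n$) and $\langle J, x\rangle = \langle J_1, x\rangle$, while $\langle (J:x), x\rangle = \langle J_1+J_2, x\rangle$. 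The key structural fact is that $J \cap R' = J_1$ and $J R' \text{-contraction arguments give } J = J_1 \cap \langle J_2, x\rangle$ up to checking on monomials; more usefully, decomposing $R_n/J$ over the variable $x$ yields that the minimal primes of $J$ are exactly: (a) the minimal primes of $J_1$ in $R'$ (not containing $x$), and (b) the ideals $\langle P, x\rangle$ where $P$ runs over minimal primes of $J_1+J_2$ in $R'$ that are not already above some prime in (a).

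First I would make precise the combinatorial description: $P$ is a minimal prime of $J$ if and only if either $x\notin P$ and $P$ is a minimal prime of $\langle J, x\rangle$ viewed in $R'$ (equivalently a minimal prime of $J$ not containing $x$), or $x\in P$ and $P = \langle Q, x\rangle$ with $Q$ a minimal prime of $J:x$ in $R'$, chosen minimal among all such candidates. This is a standard fact for squarefree monomial ideals (it is the ideal-theoretic shadow of deletion/contraction on the associated simplicial complex, using that $\langle J,x\rangle$ corresponds to the deletion and $J:x$ to the link-type contraction at $x$). Since $\codim J = \min\{\operatorname{ht} P : P \in \Min(J)\}$, I split the minimum over the two families. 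For family (a), a prime $P$ with $x\notin P$ has the same height whether considered in $R'$ or in $R_n$, and these are precisely the minimal primes of $\langle J, x\rangle$ not containing $x$ — but any minimal prime of $\langle J,x\rangle$ that does contain $x$ has height at least as large as one in family (b) shifted, so the contribution of family (a) to $\codim J$ equals $\codim\langle J, x\rangle$ provided $\langle J,x\rangle \neq R_n$; the convention in \Cref{conven} handles the unit-ideal case. For family (b), a prime $\langle Q, x\rangle$ has height $\operatorname{ht}_{R'}(Q)+1 = \operatorname{ht}_{R_n}\langle Q,x\rangle$, and minimizing $\operatorname{ht}_{R'}(Q)$ over minimal primes $Q$ of $J:x$ gives $\codim_{R'}(J:x) = \codim_{R_n}\langle J:x, x\rangle - 1$. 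Taking the overall minimum yields the claimed formula.

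The step I expect to be the main obstacle is the bookkeeping around which minimal primes of $\langle J, x\rangle$ contain $x$ and showing this does not spoil the identity $\codim\langle J,x\rangle = \min\{\operatorname{ht} P : P\in\Min(J),\ x\notin P\}$. The subtlety: a minimal prime $P$ of $J$ with $x\notin P$ remains a minimal prime of $\langle J,x\rangle$ only after adding $x$, so it is $\langle P,x\rangle$ that appears in $\Min\langle J,x\rangle$, with height $\operatorname{ht} P + 1$. To reconcile this cleanly I would instead argue directly on heights: $\codim J = \min\big(\{\operatorname{ht} P : P\in\Min J,\ x\notin P\},\ \{\operatorname{ht} P : P\in\Min J,\ x\in P\}\big)$. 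The first set: for $P\in\Min J$ with $x\notin P$, $P$ is also a minimal prime of $\langle J, x\rangle$ after adjoining $x$? No — I must take the genuinely minimal ones, i.e. work with $\Min\langle J,x\rangle$, whose members all contain $x$, and observe $\{\operatorname{ht}_{R_n}\langle P', \cdot\rangle\}$... Rather than fight this, the robust route is to reduce to $R' = R_n/\langle x\rangle$ and use: $\codim_{R_n}\langle J,x\rangle = \codim_{R'}(\bar J) + 1$ where $\bar J$ is the image of $J$, and $\bar J = \overline{J:x}$'s... Carefully, $\langle J,x\rangle/\langle x\rangle = (J+\langle x\rangle)/\langle x\rangle$ which contracts back to $J:x^\infty = J:x$ (squarefree), so $\codim\langle J,x\rangle = \codim_{R'}(J+\langle x\rangle)/\langle x\rangle$, and the alleged "$-1$" discrepancy between the two terms in the statement is exactly accounted for by whether we pass to $R'$ before or after adjoining $x$. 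I would write this out via the two standard exact sequences and a direct comparison of associated-prime sets, which is routine once the reduction mod $x$ is set up, but requires care to avoid off-by-one errors in the height count.
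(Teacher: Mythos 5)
Your proposal runs into trouble already at the claimed description of $\Min(J)$. You assert that family (a) — the minimal primes of $J$ not containing $x$ — consists of the minimal primes of $J_1=\langle G(J)\cap R'\rangle$, and that family (b) consists of the ideals $\langle P,x\rangle$ with $P\in\Min(J_1+J_2)$. This has the two roles exactly reversed. Take $J=\langle xy,\,z\rangle\subset K[x,y,z]$: here $J_1=\langle z\rangle$, $J_2=\langle y\rangle$, and $\Min(J)=\{\langle x,z\rangle,\ \langle y,z\rangle\}$; your (a) would produce $\langle z\rangle$, which does not even contain $J$, and your (b) would produce $\langle x,y,z\rangle$, which is not minimal over $J$. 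The correct statement is the opposite: a minimal prime of $J$ \emph{not} containing $x$ is a minimal prime of $J:x = J_1+J_2$ (and, since $J$ is squarefree, $x$ is a non-zero-divisor modulo $J:x$, so no minimal prime of $J:x$ contains $x$), while a minimal prime of $J$ containing $x$ is a minimal prime of $\langle J,x\rangle=\langle J_1,x\rangle$. The later confusions in your write-up — the claim that family (a) contributes $\codim\langle J,x\rangle$, and the assertion that $(J+\langle x\rangle)/\langle x\rangle$ ``contracts back to'' $J:x$ — all trace to this swap: the image of $J$ in $R'$ and the image of $J:x$ in $R'$ are different ideals, namely $J_1$ and $J_1+J_2$ respectively.

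The paper's proof is much shorter and avoids any explicit handling of minimal primes. It uses two observations. First, for \emph{any} ideal $J$ and any element $x$, one has $V(J)=V(J:x)\cup V(\langle J,x\rangle)$ (a prime $P\supseteq J$ with $x\notin P$ must contain $J:x$; one with $x\in P$ must contain $\langle J,x\rangle$), hence $\codim J=\min\{\codim(J:x),\,\codim\langle J,x\rangle\}$. Second, since $J$ is squarefree, $x$ is a non-zero-divisor on $R_n/(J:x)$, so $\codim(J:x)=\codim\langle(J:x),x\rangle-1$. The degenerate cases $J=R_n$ or $J:x=R_n$ are absorbed by \Cref{conven}. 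Once you swap your (a) and (b), your minimal-prime bookkeeping can be repaired, but the route above sidesteps entirely the off-by-one accounting you flagged as the main obstacle.
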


\begin{proof}
	If  $J=R_n$ or $J:x=R_n$, then the formula is true according to \Cref{conven}. If $J$ and $J:x$ are both proper ideals of $R_n$, then it is apparent that
	\[
	\codim J =\min \{\codim \langle J : x\rangle,\; \codim \langle J, x\rangle\}.
	\]
	Since $J$ is squarefree, $x$ is a non-zero-divisor on $R_n/\langle J: x\rangle$. This gives
	\[
	\codim \langle J : x\rangle = \codim \langle(J : x),x\rangle -  1,
	\]
	which yields the desired conclusion.
\end{proof}

The next lemma plays a crucial role in the proof of \Cref{codim}.

\begin{lem}
	\label{linearform_manysteps}
	Let $\Icc=(I_n)_{n\ge 1}$ be an $\Inc^i$-invariant chain of monomial ideals. Fix an integer $r\ge \ind^i(\Icc)$.
	For each ${\bf e}=(e_1,\ldots,e_c)\in \mathbb{Z}^c_{\ge0}$, define the chain $\Icc_{\bf e}=(I_{{\bf e},n})_{n\ge 1}$ as in \Cref{e chain}. 
	Then the following statements hold:
	\begin{enumerate}
	\item
	If $q(I_{{\bf e},r+1})= q(I_r)$, then
	\[
	 \codim I_{{\bf e},n+1}= \codim I_n+c\quad \text{for all }\ n\ge r.
	\]
	
	\item
	 If $I_n$ is a squarefree ideal, then
	\[
	\codim {I_n} =\min \{\codim I_{{\bf e},n}-|{\bf e}|\mid {\bf e}\in \{0,1\}^c\},
	\]
	 where $|{\bf e}|=e_1+\cdots+e_c$.
	
	\item
	For all ${\bf e}\in \mathbb{Z}^c_{\ge0}$ one has $\gamma_i(\Icc)\le \gamma_{i+1}(\Icc_{\bf e})$, with equality if $q(I_{{\bf e},r+1})= q(I_r)$.
	
	\item
	Assume $I_r\ne R_r$. If ${\bf e}\in \{0,1\}^c$ and $q(I_{{\bf e},r+1})= q(I_r)$, then $\gamma_i(\Icc)\le c-|{\bf e}|.$
	
	\item
	Assume $I_r\ne R_r$. Set
	\[
	 E_1=\{ {\bf e}\in \{0,1\}^c\mid q(I_{{\bf e},r+1})<q(I_r)\}\ \text{and}\
         E_2=\{ {\bf e}\in \{0,1\}^c\mid q(I_{{\bf e},r+1})=q(I_r)\}.
	\]
	Then
	\begin{equation}
	 \label{gamma}
	 \gamma_i(\Icc)=\min\{\min\{\gamma_{i+1}(\Icc_{\bf e})\mid {\bf e}\in E_1\},\; \min\{c-|{\bf e}|\mid {\bf e}\in E_2\}\}.
	\end{equation}
	
	\end{enumerate}
\end{lem}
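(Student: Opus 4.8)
My plan is to prove the five parts more or less in the order given, since each one feeds into the next, culminating in the master formula \eqref{gamma} of part (v). The underlying tool throughout is \Cref{e chain}: for a fixed $r\ge\ind^i(\Icc)$ it tells us that $\Icc_{\bf e}$ is $\Inc^{i+1}$-invariant with stability index at most $r+1$, and that $q(I_{{\bf e},r+1})=q(I_r)$ is equivalent to the very rigid conclusion $I_{{\bf e},n+1}=\langle\sigma_i(I_n),x_{1,i+1},\dots,x_{c,i+1}\rangle$ together with $R_{n+1}/I_{{\bf e},n+1}\cong R_n/I_n$ for all $n\ge r$. That ring isomorphism is what will let me transport codimensions and $\gamma$-values between levels.

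\textbf{Parts (i)--(ii).}
For (i), assume $q(I_{{\bf e},r+1})=q(I_r)$. Then by \Cref{e chain}(2) we have $R_{n+1}/I_{{\bf e},n+1}\cong R_n/I_n$ for every $n\ge r$; $R_{n+1}$ has $c$ more variables than $R_n$, so $\dim R_{n+1}/I_{{\bf e},n+1}=\dim R_n/I_n$ forces $\codim I_{{\bf e},n+1}=\codim I_n+c$, which is exactly the claim (the case $I_n=R_n$ is handled by \Cref{conven}, since then $I_{{\bf e},n+1}=R_{n+1}$ too). For (ii), I would iterate \Cref{variable} over the $c$ variables $x_{1,i+1},\dots,x_{c,i+1}$ of $R_n$. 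Repeatedly applying the formula $\codim J=\min\{\codim\langle(J:x),x\rangle-1,\ \codim\langle J,x\rangle\}$ and using that colon and sum of squarefree monomial ideals with these variables stay squarefree, one unfolds $\codim I_n$ into a minimum over the $2^c$ choices ${\bf e}\in\{0,1\}^c$ of $\codim\langle(I_n:x_{1,i+1}^{e_1}\cdots x_{c,i+1}^{e_c}),\{x_{k,i+1}:e_k=1\}\rangle-|{\bf e}|$. Finally I must reconcile the inner ideal with $I_{{\bf e},n}$: in $I_{{\bf e},n}$ one adjoins \emph{all} the variables $x_{1,i+1},\dots,x_{c,i+1}$, not just those with $e_k=1$; but modulo the adjoined variables $\{x_{k,i+1}:e_k=1\}$ the colon ideal already contains $x_{k,i+1}$ for $e_k=1$, and adjoining the remaining $x_{k,i+1}$ ($e_k=0$) does not change the codimension because those are non-zero-divisors (squarefreeness again). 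So the two expressions agree, giving (ii).

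\textbf{Parts (iii)--(v).}
For (iii), the inequality $\gamma_i(\Icc)\le\gamma_{i+1}(\Icc_{\bf e})$ should come from tracking generators: the generators of $I_{{\bf e},n}$ lying entirely in variables with index $>i+1$ are, up to the substitution governed by forming colons and adjoining $x_{\bullet,i+1}$, closely related (via $\sigma_i$ when the $q$-equality holds, hence the sharpening) to the generators of $G_i^+(I_n)$, so an $(i+1)$-cover of $\Icc_{\bf e}$ pulls back to an $i$-cover of $\Icc$; here I expect to lean on \Cref{min_primes} to phrase covers in terms of minimal primes of $\langle G_{i+1}^+(I_{{\bf e},n})\rangle$ versus $\langle G_i^+(I_n)\rangle$, and on \Cref{colon chain} / \Cref{i cover properties}. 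For (iv), if $q(I_{{\bf e},r+1})=q(I_r)$ and ${\bf e}\in\{0,1\}^c$, then $I_{{\bf e},n+1}=\langle\sigma_i(I_n),x_{1,i+1},\dots,x_{c,i+1}\rangle$; any $u\in G_i^+(I_n)$ has $\min(u)>i$, hence $\min(u)\ge i+1$, and after applying $\sigma_i$ and reducing mod the $x_{\bullet,i+1}$ the variables with first index $k$ such that $e_k=1$ have been quotiented out, so $u$ must actually use a variable $x_{k,\ast}$ with $e_k=0$; thus $\{k:e_k=0\}$ is an $i$-cover of $I_n$, giving $\gamma_i(\Icc)\le c-|{\bf e}|$. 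Part (v) then assembles (ii), (iii), (iv): take codimensions in (ii), note $\codim I_{{\bf e},n}$ for ${\bf e}\in E_2$ grows with slope $c$ by (i) while governing also $\gamma$ via (iv), and for ${\bf e}\in E_1$ the strict drop $q(I_{{\bf e},r+1})<q(I_r)$ plus induction on the (finite, by Hillar--Sullivant) data forces the slope of $\codim I_{{\bf e},n}$ to equal $\gamma_{i+1}(\Icc_{\bf e})$, so comparing slopes in the minimum of (ii) yields \eqref{gamma}.

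\textbf{Main obstacle.}
The delicate point is part (v): one is essentially running an induction on $i$ (and on the size of the chain's stabilization data) to identify the slope of $\codim I_n$ with $\gamma_i(\Icc)$, and inside that induction one must argue that the minimum in (ii) is eventually \emph{realized} by terms whose slope equals the claimed $\gamma_i(\Icc)$ --- a term with larger slope is eventually irrelevant, but a term with the \emph{same} slope and a more negative intercept could dominate, so I need the slope bookkeeping (slope $=c$ for $E_2$-terms, slope $=\gamma_{i+1}(\Icc_{\bf e})$ for $E_1$-terms) to be exact, not just an inequality. Getting the equality in (iii) under the $q$-hypothesis, via the $\sigma_i$-description, is what makes this possible, so that is where I would concentrate the real work.
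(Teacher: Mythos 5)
Parts (i) and (ii) are handled correctly and in essentially the same way as the paper (for (ii), your worry about ``reconciling'' the ideal is unnecessary: both branches of \Cref{variable} adjoin the variable $x$, so after $c$ iterations every term already contains all of $x_{1,i+1},\dots,x_{c,i+1}$, yielding $I_{{\bf e},n}$ on the nose). The problems start at (iii)--(v).

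For (iii) you only sketch an intention; the paper's actual mechanism is the chain of containments
\[
I_{{\bf e},n+1}\;\supseteq\;\langle I_{n+1},x_{1,i+1},\dots,x_{c,i+1}\rangle\;\supseteq\;\langle\sigma_i(I_n),x_{1,i+1},\dots,x_{c,i+1}\rangle,
\]
valid for \emph{all} $n\ge 1$, combined with \Cref{i cover properties}(iii) and the identities $\gamma_{i+1}(\langle\sigma_i(I_n),x_{\bullet,i+1}\rangle)=\gamma_{i+1}(\langle\sigma_i(I_n)\rangle)=\gamma_i(I_n)$. Your sketch via $\mathcal{C}_i$ and minimal primes is plausible in spirit but never gets at this containment, which is what makes the inequality, and the equality under the $q$-hypothesis, actually go through.

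For (iv) your argument is genuinely off. You try to show directly that $C=\{k:e_k=0\}$ is an $i$-cover of $I_n$, claiming that after applying $\sigma_i$ and reducing modulo ``the $x_{\bullet,i+1}$ with $e_k=1$'' one forces $u\in G_i^+(I_n)$ to use a variable with $e_k=0$. But all of $x_{1,i+1},\dots,x_{c,i+1}$ lie in $I_{{\bf e},n+1}$ (not just those with $e_k=1$), so ``reducing mod'' does not distinguish the two kinds of indices, and no contradiction falls out. The paper does something different: it shows that $C$ is an $(i+1)$-cover of $I_{{\bf e},n+1}$, via a careful minimal-counterexample argument on $G_{i+1}^+(I_{{\bf e},n+1})$ (choosing $u$ of lowest degree and then smallest $\min(u)$, writing $u=\sigma_i(v)$ and splitting on $\min(v)=i+1$ versus $\min(v)\ge i+2$), and then invokes (iii) to pass from $\gamma_{i+1}(\Icc_{\bf e})\le|C|$ to $\gamma_i(\Icc)\le c-|{\bf e}|$. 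Your direct claim that $C$ $i$-covers $I_n$ is not what the paper proves and is not justified by what you write.

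For (v) you have confused the statement with the later \Cref{codim}. Part (v) is a purely combinatorial identity for $\gamma_i$; it must be proved before, and is then \emph{used} in, the codimension theorem. Your argument ``compare slopes in the minimum of (ii), the $E_2$-terms grow with slope $c$, the $E_1$-terms with slope $\gamma_{i+1}(\Icc_{\bf e})$'' is precisely the induction that later proves \Cref{codim}; reading it back as a proof of (v) is circular, because identifying $\gamma_i(\Icc)$ as the slope of $\codim I_n$ already presupposes (v). The paper instead proves (v) directly: $\gamma_i(\Icc)\le\gamma$ follows from (iii) and (iv), and for $\gamma_i(\Icc)\ge\gamma$ one picks a minimal $i$-cover $C$ of $I_n$ (with $n\ge r+1$), sets $e_k=0$ for $k\in C$ and $e_k=1$ otherwise, and verifies that this same $C$ is an $(i+1)$-cover of $I_{{\bf e},n}$, giving $\gamma_{i+1}(\Icc_{\bf e})\le|C|=c-|{\bf e}|=\gamma_i(\Icc)$. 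That construction is the missing idea.
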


\begin{proof}
	(i) From \Cref{e chain}(ii) one gets the isomorphisms
	\[
	R_{n+1}/I_{{\bf e},n+1} \cong R_n/I_n \quad\text{for all $n\ge r$},
	\]
        which yield the assertion.
	
	(ii) Using \Cref{variable}, the assertion follows by induction on $c$.

        (iii) Let $\sigma_i$ be the $i$-shift defined preceding \Cref{e chain}. Since $\sigma_i\in\Inc^i_{n,n+1}$, one has
        \[
        I_{{\bf e},n+1}\supseteq\langle I_{n+1},x_{1,i+1},\ldots,x_{c,i+1}\rangle\supseteq\langle\sigma_i(I_n),x_{1,i+1},\ldots,x_{c,i+1}\rangle\ \text{ for all }\ n\ge 1.
        \]
        By \Cref{i cover properties}(iii), this gives
        \begin{align*}
         \gamma_{i+1}(I_{{\bf e},n+1})&\ge \gamma_{i+1}(\langle\sigma_i(I_n),x_{1,i+1},\ldots,x_{c,i+1}\rangle)
                                    =  \gamma_{i+1}(\langle\sigma_i(I_n)\rangle)=  \gamma_i(I_n).
        \end{align*}
        The last equality follows from the definition of $\sigma_i$. The only inequality in the above equation becomes an equality if $n\ge r$ and $q(I_{{\bf e},r+1})= q (I_r)$, by \Cref{e chain}(ii).

        (iv) Set $C=\{k\in[c]\mid e_k=0\}$. By (iii), it suffices to show that $C$ is an $(i+1)$-cover of $I_{{\bf e},n+1}$ for $n\ge r$. Assume the contrary. Then there exists a monomial $u\in G_{i+1}^+(I_{{\bf e},n+1})$ of lowest degree which  is not divisible by $x_{k,j}$ for any $k\in C$ and $j\ge 1.$ If there is more than one such monomial we choose $u$ with $\min(u)$ as small as possible. Note that $\min(u)\ge i+2$ since $u\in G_{i+1}^+(I_{{\bf e},n+1})$. By \Cref{e chain}(ii),
        \[
         I_{{\bf e},n+1}=\langle\sigma_i(I_n),x_{1,i+1},\ldots,x_{c,i+1}\rangle.
        \]
        It follows that $u\in \sigma_i(I_n)$. So $u=\sigma_i(v)$ for some $v\in I_n$. By definition of $\sigma_i$ one has
        \[
         \min(v)=\min(u)-1\ge i+1.
        \]        
By the choice of $u$, the monomial $v$ is a minimal generator of  $I_{{\bf e},n+1}$.
If $\min(v)\ge i+2$, then $v\in G_{i+1}^+(I_{{\bf e},n+1})$. Since $\min(v)<\min(u)$, this is a contradiction to $\min (u)$ being least possible among all the monomials in $G_{i+1}^+(I_{{\bf e},n+1})$ of lowest degree which are not divisible by $x_{k,j}$ for any $k\in C$ and $j\ge 1.$ 
        Hence, $\min(v)= i+1$, and we may write $v=x_{l_1,i+1}\cdots x_{l_s,i+1}v'$, where $v'\in R_n$ with $\min(v')> i+1$. Since $u=\sigma_i(v)$ is not divisible by any $x_{k,j}$ with $k\in C$, we must have $l_1,\dots,l_s\notin C$. Thus, $e_{l_1}=\cdots=e_{l_s}=1$, and so
        \[
         v'\in I_n:x_{l_1,i+1}\cdots x_{l_s,i+1}\subseteq I_{{\bf e},n}.
        \]
        Since $\Icc_{\bf e}$ is $\Inc^{i+1}$-invariant (see \Cref{e chain}) and $\sigma_{i+1}\in\Inc^{i+1}_{n,n+1}$, we obtain  $\sigma_{i+1}(v')\in I_{{\bf e},n+1}$. As $\min(v')> i+1$, one has $\sigma_i(v')=\sigma_{i+1}(v')$ . Thus, $\sigma_{i}(v')\in I_{{\bf e},n+1}$. But this contradicts our assumption that $u=\sigma_i(v)=x_{l_1,i+2}\cdots x_{l_s,i+2}\sigma_i(v')$ is a minimal generator of $I_{{\bf e},n+1}$.

        (v) Let $\gamma$ denote the right-hand side of \Cref{gamma}. From (iii) and (iv) it follows that $\gamma_i(\Icc)\le \gamma$. For the reverse inequality it suffices to find a tuple ${\bf e}\in \{0,1\}^c$ with
        \[
         \gamma_{i+1}(\Icc_{\bf e})\le \gamma_i(\Icc)=c-|{\bf e}|.
        \]
        Let $n\ge r+1$ and $C$ an $i$-cover of $I_n$ with $\gamma_i(I_n)=|C|$. 
        Consider ${\bf e}=(e_1,\dots,e_c)$ with
        \[
         e_k=\begin{cases}
              0&\text{if }\ k\in C,\\
              1&\text{if }\ k\notin C.
             \end{cases}
        \]
        Then it is clear that $\gamma_i(I_n)=|C|=c-|{\bf e}|.$ To complete the proof we will show that $C$ is an $(i+1)$-cover of $I_{{\bf e},n}$. For any $u\in G_{i+1}^+(I_{{\bf e},n})$ one has
        \[
         v=x_{1,i+1}^{e_1}\cdots x_{c,i+1}^{e_c}u=\prod_{k\notin C}x_{k,i+1}u\in I_n.
        \]
        This implies that $v$ has a divisor $v'\in G_{i}^+(I_{n})$. Since $C$ covers $v'$, it covers $v$ as well. It then follows that $C$ must cover $u$. Therefore, $C$ is an $(i+1)$-cover of $I_{{\bf e},n}$, as desired.
\end{proof}

We are now ready to prove \Cref{codim}.

\begin{proof}[Proof of \Cref{codim}]
Using \Cref{radical}, we may assume that $\Icc$ is a chain of squarefree monomial ideals. Let $\mathcal{F}$ denote the family of all $(i,r,\Icc)$, where $i,r\ge 0$ are integers and $\Icc=(I_n)_{n\ge 1}$ is an $\Inc^i$-invariant chain of squarefree monomial ideals with $\ind^i(\Icc)\le r$. 

Following the idea of the proofs of \cite[Theorem 6.2]{NR17} and \cite[Theorem 6.2]{LNNR}, we argue by induction on $q= q(I_r)$ that for any $(i,r,\Icc)\in \mathcal{F}$ one has that
\[
\codim I_{n+1} = \codim I_n + \gamma_i(\Icc) \quad \text{whenever }\ n\gg 0.
\]

If $q=0$, then $I_r=R_r$, and so $I_n=R_n$ for every $n\ge r$. By \Cref{conven}, this means that $\codim I_n=\infty$ for $n\ge r$, and the desired conclusion holds according to our convention in \Cref{def:i cover}.

Now assume $q\ge 1$. For each ${\bf e}\in \{0,1\}^c$, we consider the chain $\Icc_{\bf e}=(I_{{\bf e},n})_{n\ge 1}$ as in \Cref{e chain}.
By this lemma, $\Icc_{\bf e}$ is an $\Inc^{i+1}$-invariant chain with $\ind^{i+1}(\Icc_{\bf e})\le r+1$. Write $\{0,1\}^c=E_1\cup E_2$ with
\[
E_1=\{ {\bf e}\in \{0,1\}^c\mid q(I_{{\bf e},r+1})<q\}\quad \text{and}\quad
E_2=\{ {\bf e}\in \{0,1\}^c\mid q(I_{{\bf e},r+1})=q\}.
\]
If ${\bf e}\in E_2$, then \Cref{linearform_manysteps}(i) gives
\[
\codim I_{{\bf e},n}=\codim I_{n-1} +c \quad \text{for all }\ n\ge r+1.
\]
Hence, for all $n \ge r+1$ it follows from \Cref{linearform_manysteps}(ii) that
\begin{align}
    \label{eq:compar}
\codim I_n & = \min\big\{\min\{\codim I_{{\bf e},n}-|{\bf e}| \mid  {\bf e}\in E_1\},\ \min\{\codim I_{{\bf e},n}-|{\bf e}|\mid {\bf e}\in E_2\}\big\}  \nonumber  \\
               & =  \min\big\{\min\{\codim I_{{\bf e},n}-|{\bf e}|\mid {\bf e}\in E_1\},\ \codim I_{n-1}+ \min\{c-|{\bf e}|\mid {\bf e}\in E_2\}\big\}.
\end{align}

For ${\bf e} \in E_1$, the induction hypothesis applied to $(i+1,r+1,\Icc_{\bf e})\in\mathcal{F}$ yields the existence of an integer $N(\Icc_{\bf e})\ge r+1$ such that
\begin{equation}
    \label{eq:use IH}
\codim I_{{\bf e}, n+1} = \codim I_{{\bf e}, n} + \gamma_{i+1}(\Icc_{\bf e}) \quad \text{ whenever } n \ge N(\Icc_{\bf e}).
\end{equation}
Set
\[
N = \max \{N(\Icc_{\bf e})  \mid {\bf e} \in E_1\}.
\]
We will show 
\begin{equation}
 \label{eq:induction}
 \codim I_{n+1} = \codim I_n + \gamma_i(\Icc) \quad \text{whenever }\ n\ge N.
\end{equation}
Indeed, by \Cref{linearform_manysteps}(v) 
\begin{equation}
	 \label{gamma2}
	 \gamma_i(\Icc)=\min\{\min\{\gamma_{i+1}(\Icc_{\bf e})\mid {\bf e}\in E_1\},\; \min\{c-|{\bf e}|\mid {\bf e}\in E_2\}\}.
\end{equation}
 If $n \ge N$ and ${\bf e}  \in E_1$, then
\begin{align*}
\codim I_{{\bf e},n+1}- |\be| & = \codim I_{\be, n} + \gamma_{i+1}(\Icc_{\bf e}) - |\be | \\
& \ge \codim I_n + \gamma_{i+1}(\Icc_{\bf e}) &&\text{by \Cref{eq:compar} } \\
& \ge \codim I_n + \gamma_i(\Icc) &&\text{by \Cref{gamma2}. }
\end{align*}
Combined with \Cref{eq:compar,gamma2}, this implies
\begin{equation}
\begin{aligned}
    \label{eq:upper bound}
\codim I_{n+1}  & =  \min\big\{\min\{\codim I_{{\bf e},n+1}-|{\bf e}|\mid {\bf e}\in E_1\},\ \codim I_{n}+ \min\{c-|{\bf e}|\mid {\bf e}\in E_2\}\big\}\\
&\ge \min\big\{\min\{\codim I_n + \gamma_i(\Icc)\mid {\bf e}\in E_1\},\ \codim I_{n}+ \min\{c-|{\bf e}|\mid {\bf e}\in E_2\}\big\}\\
&\ge\codim I_n + \gamma_i(\Icc) \quad \text{ if } n \ge N.
\end{aligned}
\end{equation}
Moreover, \Cref{eq:compar} gives
\[
\codim I_{n+1} \le \codim I_n + \min \{ c-|\be |  \mid  \be \in E_2\}  \quad \text{ if } n \ge N.
\]
This together with Inequality~\eqref{eq:upper bound} yields \Cref{eq:induction} if $\gamma_i(\Icc) = \min \{c- |\be |  \mid  \be \in E_2\}$.

Thus, it remains to consider the case $\gamma_i(\Icc) < \min \{c- |\be |  \mid  \be \in E_2\}$. Suppose \Cref{eq:induction} is not true. Taking into account Inequality~\eqref{eq:upper bound}, this means that, for any $n_0 \ge N$, there is some $n > n_0$ with
\[
\codim I_{n+1} > \codim I_n + \gamma_i(\Icc).
\]
We use this to define an increasing sequence $(n_j)_{j \in \N}$ of integers:  set $n_0 = N$ and, for $j \ge 1$, let
$n_j$ be the least integer $n > n_{j-1}$ with $\codim I_{n+1} > \codim I_n + \gamma_i(\Icc)$. Thus, we obtain for every $j \ge 1$,
\begin{equation}
	\label{eq:ineq}
\codim I_{n_j+1} \ge  \codim I_N  + (n_j + 1 - N) \gamma_i(\Icc) +j.
\end{equation}
Our assumption $\gamma_i(\Icc) < \min \{c- |\be |  \mid  \be \in E_2\}$ allows us to fix some $\be_0 \in E_1$ such that $\gamma_i(\Icc) = \gamma_{i+1}(\Icc_{{\bf e}_0})$. Let $j$ be an integer with $j > \codim I_{\be_0, N}-\codim I_N- | \be_0 |$, i.e.,
\[
\codim I_N  + (n_j + 1 - N) \gamma_i(\Icc) + j > \codim I_{\be_0,N}  + (n_j + 1 - N) \gamma_i(\Icc)  - | \be_0 |.
\]
Combining this with Inequality~\eqref{eq:ineq} one gets
\begin{align*}
\codim I_{n_j + 1} & >  \codim I_{\be_0,N}  + (n_j + 1 - N) \gamma_i(\Icc)  - | \be_0 |\\
& =  \codim I_{\be_0,N}  + (n_j + 1 - N) \gamma_{i+1}(\Icc_{{\bf e}_0})  - | \be_0 | \\
& = \codim I_{\be_0, n_j+1} - | \be_0 |  \quad \text{ by \Cref{eq:use IH}.}
\end{align*}
However, this contradicts \Cref{eq:compar}. The proof is complete.
\end{proof}

As a consequence, we obtain an explicit and more general version of the first part of \cite[Theorem 7.10]{NR17}. We use the convention that the dimension of a zero module is $-\infty$. 

\begin{cor}
 \label{dim}
 Let $\Icc=(I_n)_{n\ge 1}$ be an $\Inc^i$-invariant chain of ideals, and let $\le$ be any monomial order respecting $\Inc^i$. 
 Then one has 
  \[
  \dim R_n/I_n=A (\Icc) n + B (\Icc)\quad \text{for }\ n\gg 0,  
 \]
 where $A (\Icc) = c-\gamma_i(\ini_{\le}(\Icc))$ and $B (\Icc) = - D (\ini_{\le}(\Icc))$. In particular, the integers $\gamma_i (\Icc) = \gamma_i(\ini_{\le}(\Icc))$ and $D (\Icc) = D (\ini_{\le}(\Icc))$ are independent of the choice of $\le$.  
\end{cor}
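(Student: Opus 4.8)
The plan is to deduce \Cref{dim} from \Cref{codim} together with the basic dimension theory of initial ideals. First I would recall that for any ideal $J \subseteq R_n$ and any monomial order $\le$ on $R_n$, one has $\dim R_n/J = \dim R_n/\ini_{\le}(J)$; equivalently $\codim J = \codim \ini_{\le}(J)$. Combining this with $\dim R_n/J = cn - \codim J$ (valid when $J \neq R_n$, and consistent with our conventions that $\codim R_n = \infty$ and $\dim 0 = -\infty$), we get
\[
\dim R_n/I_n = cn - \codim I_n = cn - \codim \ini_{\le}(I_n) \quad \text{for all } n \ge 1.
\]
By \Cref{lem_initial_filtration}, the chain $\ini_{\le}(\Icc) = (\ini_{\le}(I_n))_{n\ge 1}$ is again $\Inc^i$-invariant (and its $i$-stability index is finite), so it is a chain of monomial ideals to which \Cref{codim} applies.

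Next I would apply \Cref{codim} to $\ini_{\le}(\Icc)$: there is an integer $D(\ini_{\le}(\Icc))$ with
\[
\codim \ini_{\le}(I_n) = \gamma_i(\ini_{\le}(\Icc))\, n + D(\ini_{\le}(\Icc)) \quad \text{for } n \gg 0.
\]
Substituting into the displayed formula above gives, for $n \gg 0$,
\[
\dim R_n/I_n = cn - \gamma_i(\ini_{\le}(\Icc))\, n - D(\ini_{\le}(\Icc)) = \bigl(c - \gamma_i(\ini_{\le}(\Icc))\bigr) n - D(\ini_{\le}(\Icc)),
\]
which is exactly the asserted linear function with $A(\Icc) = c - \gamma_i(\ini_{\le}(\Icc))$ and $B(\Icc) = -D(\ini_{\le}(\Icc))$.

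Finally, for the independence statement, I would argue as follows. The left-hand side $\dim R_n/I_n$ does not depend on the choice of $\le$; it depends only on the chain $\Icc$ itself. Hence the linear function $\bigl(c - \gamma_i(\ini_{\le}(\Icc))\bigr) n - D(\ini_{\le}(\Icc))$, being eventually equal to this intrinsic quantity, has slope and intercept determined by $\Icc$ alone. Comparing two monomial orders $\le$ and $\le'$ both respecting $\Inc^i$, we conclude $c - \gamma_i(\ini_{\le}(\Icc)) = c - \gamma_i(\ini_{\le'}(\Icc))$ and $D(\ini_{\le}(\Icc)) = D(\ini_{\le'}(\Icc))$, i.e. $\gamma_i(\ini_{\le}(\Icc))$ and $D(\ini_{\le}(\Icc))$ are independent of $\le$. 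To see that the common value of $\gamma_i(\ini_{\le}(\Icc))$ equals $\gamma_i(\Icc)$ as defined for the (possibly non-monomial) chain $\Icc$ — matching the notation $\gamma_i(\Icc) = \gamma_i(\ini_{\le}(\Icc))$ in the statement — I would take this as the definition of $\gamma_i(\Icc)$ in the non-monomial case, or, if $\gamma_i$ has already been defined on arbitrary chains, note that passing to initial ideals does not change codimensions and invoke the uniqueness just established. The only mild subtlety, and the step I would treat most carefully, is the bookkeeping at the boundary cases $I_n = R_n$: there $\dim R_n/I_n = -\infty$ while $\codim I_n = \infty$, so one must check that the identity $\dim = cn - \codim$ and the eventual linearity are compatible — but since $\Icc$ stabilizes, either $I_n = R_n$ for all $n \ge \ind^i(\Icc)$ (handled by the conventions, with the statement read as an equality of symbols $\pm\infty$) or $I_n \neq R_n$ for all large $n$, and in the latter case everything is genuinely finite and the computation above goes through verbatim.
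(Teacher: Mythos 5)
Your proof is correct and follows the same route as the paper: reduce to the monomial case via $\codim I_n = \codim \ini_{\le}(I_n)$, invoke \Cref{lem_initial_filtration} to get an $\Inc^i$-invariant chain of monomial ideals, apply \Cref{codim}, and deduce independence of $\le$ from the fact that $\dim R_n/I_n$ is intrinsic. The extra care you take with the $I_n = R_n$ boundary cases and with reading $\gamma_i(\Icc) := \gamma_i(\ini_{\le}(\Icc))$ as the definition in the non-monomial setting is sound and slightly more explicit than the paper's one-line argument.
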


\begin{proof}
 Since $\codim I_n=\codim (\ini_{\le}(I_n))$ for all $n\ge 1$ (see, e.g., \cite[Proposition 3.1(a)]{BC}), the result follows from \Cref{lem_initial_filtration} and \Cref{codim}.  
 \end{proof}
 
 \begin{remark}
  Let $\Icc=(I_n)_{n\ge 1}$ be a chain of ideals. If this chain is $\Inc^i$-invariant for some $i\ge 0$, then by definition, it is also $\Inc^j$-invariant for all $j\ge i$. So \Cref{dim} implies that $\gamma_i (\Icc)=\gamma_j (\Icc)$ for all $j\ge i$. Hence, $\gamma_i (\Icc)$ is independent of the choice of $i$ such that $\Icc$ is an $\Inc^i$-invariant chain.
 \end{remark}


\section{Projective dimension up to symmetry}\label{sec4}

In this section we provide evidence for \Cref{conj}. First, recall that an ideal $J\subseteq R_n$ is \emph{perfect} if $\codim J=\pd (R_n/J)$. \Cref{codim} thus implies:

\begin{prop}
 \label{CM}
 If $\Icc=(I_n)_{n\ge 1}$ is an $\Inc^i$-invariant chain of  ideals such that $I_n$ is perfect for all $n\gg0$, then \Cref{conj} is true for $\Icc$.
\end{prop}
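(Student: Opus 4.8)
The plan is to obtain \Cref{CM} as a direct consequence of the eventual linearity of the codimension established in \Cref{dim} together with the defining property of perfect ideals. We may assume $I_n \subsetneq R_n$ for $n \gg 0$: indeed, if $I_n = R_n$ for infinitely many $n$ then $R_n/I_n = 0$ has infinite codimension (by \Cref{conven}), so such $I_n$ cannot be perfect, contradicting the hypothesis.

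First I would fix any monomial order $\le$ on $R$ respecting $\Inc^i$ (for instance the lexicographic order). By \Cref{dim} there are integers $A(\Icc)$ and $B(\Icc)$ with $\dim R_n/I_n = A(\Icc)\, n + B(\Icc)$ for $n \gg 0$, where $A(\Icc) = c - \gamma_i(\ini_{\le}(\Icc))$ and $B(\Icc) = -D(\ini_{\le}(\Icc))$. Since $I_n$ is a proper ideal of $R_n$ for $n \gg 0$, we have $\codim I_n = cn - \dim R_n/I_n$, hence
\[
\codim I_n = \gamma_i(\ini_{\le}(\Icc))\, n + D(\ini_{\le}(\Icc)) \quad \text{for } n \gg 0,
\]
a linear function of $n$ whose coefficients are genuine integers (being $c - A(\Icc)$ and $-B(\Icc)$, respectively).

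Next I would invoke the hypothesis: for $n \gg 0$ the ideal $I_n$ is perfect, which by definition means $\pd(R_n/I_n) = \codim I_n$. Substituting the displayed expression yields $\pd(R_n/I_n) = \gamma_i(\ini_{\le}(\Icc))\, n + D(\ini_{\le}(\Icc))$ for $n \gg 0$, which is precisely the assertion of \Cref{conj} for $\Icc$, with slope $a = \gamma_i(\ini_{\le}(\Icc))$ and intercept $b = D(\ini_{\le}(\Icc))$.

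There is essentially no serious obstacle here; the proposition is a formal corollary of \Cref{dim} and the definition of perfectness. The only points deserving a word of care are the bookkeeping for the degenerate case $I_n = R_n$ and the (immediate) verification that the coefficients furnished by \Cref{dim} are integers. For emphasis one may additionally remark that, by the Auslander--Buchsbaum formula, the conclusion can be rephrased as: $\operatorname{depth} R_n/I_n$ is eventually a linear function of $n$.
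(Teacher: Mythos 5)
Your proof is correct and follows essentially the same route the paper takes: the paper derives \Cref{CM} in one line from the eventual linearity of $\codim I_n$ (via \Cref{codim}, equivalently \Cref{dim}) combined with the definition of a perfect ideal. Your write-up is slightly more careful in two respects that are worth nothing but don't change the substance: you explicitly pass through \Cref{dim} to handle arbitrary (not just monomial) ideals, and you dispose of the degenerate case $I_n = R_n$.
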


\begin{example}
 Chains that satisfy the assumption of \Cref{CM} include the following interesting ones:
 \begin{enumerate}
  \item
  $\Icc$ is generated by one monomial orbit; see \cite[Corollary 2.2]{GN}.

  \item
  $I_r$ is an Artinian ideal in $R_r$ for some $r\ge \ind^i(\Icc)$. 

  \item
  $I_n$ is generated by the $t$-minors of the $c \times n$ matrix with entries being the variables of $R_n$ for all $n\ge 1$, where $t \le c$ is a fixed integer; see, e.g., \cite[Theorem 7.3.1]{BH}. 
  
  \item 
  More generally, 
  $\Icc$ is a chain of graded ideals such that $R_n/I_n$ is Cohen-Macaulay for $n \gg 0$; see, e.g., \cite[Corollary 2.2.15]{BH}. 
 \end{enumerate}
 \end{example}

 Using the notation of \Cref{dim}, it is not hard to give linear bounds for $\pd (R_n/I_n)$:

\begin{prop}
 \label{pd both bounds}
 Let $\Icc=(I_n)_{n\ge 1}$ be an $\Inc^i$-invariant chain of  proper ideals. Then one has
  \[
  cn\ge \pd (R_n/I_n) \ge \gamma_i (\Icc) n+D(\Icc) \quad\text{for }\ n\gg0. 
 \]       
\end{prop}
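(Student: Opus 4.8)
The plan is to establish the two inequalities separately, both with elementary arguments that reduce to results already in hand.

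\textbf{Upper bound.} For the inequality $\pd(R_n/I_n) \le cn$, I would simply invoke the fact that $R_n$ is a polynomial ring in $cn$ variables over the field $K$, hence a regular ring of Krull dimension $cn$. By the Auslander--Buchsbaum--Serre theorem, every finitely generated $R_n$-module has projective dimension at most $cn$; in particular $\pd(R_n/I_n) \le cn$. (One can also note that since $I_n$ is a proper ideal, $R_n/I_n \ne 0$, so this is the only content needed; no invariance is used here.)

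\textbf{Lower bound.} For $\pd(R_n/I_n) \ge \gamma_i(\Icc)n + D(\Icc)$ when $n \gg 0$, I would combine the Auslander--Buchsbaum formula with \Cref{codim}. First, observe that $\Icc$ need not be a chain of monomial ideals, so I would pass to an initial chain: fix a monomial order $\le$ respecting $\Inc^i$ (e.g.\ the lexicographic order described in \Cref{sec2}), and consider $\ini_\le(\Icc) = (\ini_\le(I_n))_{n\ge1}$, which by \Cref{lem_initial_filtration} is again $\Inc^i$-invariant. The standard semicontinuity of projective dimension under taking initial ideals gives $\pd(R_n/I_n) \le \pd(R_n/\ini_\le(I_n))$ — wait, this is the wrong direction for a lower bound, so instead I would argue directly on $I_n$ using codimension. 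The key chain of inequalities is: for any proper ideal $J \subseteq R_n$, we always have $\pd(R_n/J) \ge \height J = \codim J$, since the projective dimension of $R_n/J$ is at least the grade of $J$ (as $R_n$ is Cohen--Macaulay, grade equals height), and $\pd \ge \operatorname{grade}$ holds in general. Now apply \Cref{codim} to $\ini_\le(\Icc)$, together with \Cref{dim} which tells us $\codim I_n = \codim \ini_\le(I_n) = \gamma_i(\ini_\le(\Icc)) n + D(\ini_\le(\Icc)) = \gamma_i(\Icc)n + D(\Icc)$ for $n \gg 0$. Chaining these: $\pd(R_n/I_n) \ge \codim I_n = \gamma_i(\Icc)n + D(\Icc)$ for $n \gg 0$.

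\textbf{Main obstacle.} The only subtlety is that \Cref{codim} is stated for chains of \emph{monomial} ideals, whereas \Cref{conj} and this proposition allow arbitrary ideals; the bridge is \Cref{dim}, which already packages the passage to initial ideals and records that $\gamma_i(\Icc)$ and $D(\Icc)$ are well-defined for arbitrary $\Inc^i$-invariant chains via $\gamma_i(\Icc) = \gamma_i(\ini_\le(\Icc))$ and $D(\Icc) = D(\ini_\le(\Icc))$. So the lower bound is immediate from $\pd(R_n/I_n) \ge \codim I_n$ plus \Cref{dim}, requiring no new work. I expect the write-up to be only a few lines, with the one point worth stating carefully being the inequality $\pd(R_n/I_n) \ge \codim I_n$ for proper $I_n$, which follows since $R_n$ is Cohen--Macaulay (so $\operatorname{grade} I_n = \codim I_n$) and $\pd(R_n/I_n) \ge \operatorname{grade} I_n$ always.
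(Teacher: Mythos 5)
Your proposal is correct and takes essentially the same approach as the paper: the upper bound follows from $R_n$ being a regular ring of dimension $cn$, and the lower bound from $\pd(R_n/I_n)\ge\codim I_n$ combined with \Cref{dim}, which already packages the passage to initial ideals and shows that $\gamma_i(\Icc)$ and $D(\Icc)$ are well-defined for arbitrary $\Inc^i$-invariant chains. The brief detour toward semicontinuity of projective dimension under initial ideals was unnecessary (and pointed the wrong way, as you noticed), but you correctly discarded it; the paper simply cites a reference for $\pd\ge\codim$, whereas you supply the standard grade argument, which is equally valid.
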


\begin{proof}
 The upper bound is Hilbert's Syzygy theorem. For the lower bound,  note that $I_n\ne R_n$ for all $n\ge 1$ by assumption. By using  \Cref{dim} and the estimate 
 \[
  \pd (R_n/I_n) \ge \codim I_n \quad \text{for all }\ n\ge 1
 \]
 (see, e.g.,  \cite[Corollary 16.12]{BV}), the desired conclusion follows.
\end{proof}

In the remaining part of this section we focus on chains of proper monomial ideals. Our main results are lower  linear bounds for $\pd I_n = \pd (R_n/I_n) -1$ that improve the bound given in \Cref {pd both bounds}.

Let $\Icc=(I_n)_{n\ge 1}$ be an $\Inc^i$-invariant chain of proper monomial ideals. Fix an integer $r\ge \ind^i(\Icc)$.  For each ${\bf e}\in \mathbb{Z}^c_{\ge0}$ consider the chain $\Icc_{\bf e}=(I_{{\bf e},n})_{n\ge 1}$ as in \Cref{e chain}. We know that $\Icc_{\bf e}$ is an $\Inc^{i+1}$-invariant chain with $\gamma_i(\Icc)\le \gamma_{i+1}(\Icc_{\bf e})$. Let
\begin{equation}
\label{E}
 E(\Icc)=\{{\bf e}\in \mathbb{Z}^c_{\ge0}\mid I_{{\bf e},r+1}\ne R_{r+1}\}.
\end{equation}
Define
\[
 \gamma_{i+1}^{\max}(\Icc)=\max\{\gamma_{i+1}(\Icc_{\bf e})\mid {\bf e}\in E(\Icc)\}.
\]
Similarly, for each ${\bf e}\in E(\Icc)$ and ${\bf e}'=(e_1',\dots,e_c')\in \mathbb{Z}^c_{\ge0}$ one can build an $\Inc^{i+2}$-invariant chain $\Icc_{{\bf e},{\bf e}'}=(I_{{\bf e},{\bf e}',n})_{n\ge 1}$ with
\[I_{{\bf e},{\bf e}',n}=
	\langle (I_{{\bf e},n}:x_{1,i+2}^{e_1'}\cdots x_{c,i+2}^{e_c'}),x_{1,i+2},\ldots,x_{c,i+2}\rangle \quad\text{for all }\ n\ge 1.
\]
Then define
\[
 E(\Icc_{\bf e})=\{{\bf e}'\in \mathbb{Z}^c_{\ge0}\mid I_{{\bf e},{\bf e}',r+2}\ne R_{r+2}\} 
\]
and
\[
 \gamma_{i+2}^{\max}(\Icc)=\max\{\gamma_{i+2}(\Icc_{{\bf e},{\bf e}'})\mid {\bf e}\in E(\Icc),\; {\bf e}'\in E(\Icc_{\bf e})\}.
\]
Repeating this construction we obtain a non-decreasing sequence of integers 
\[
 \gamma_i(\Icc)\le \gamma_{i+1}^{\max}(\Icc)\le \gamma_{i+2}^{\max}(\Icc)\le \cdots\le c.
\]
Let $\Gamma_i(\Icc)$ denote the limit of this sequence:
\[
 \Gamma_i(\Icc)=\max\{\gamma_{i+k}^{\max}(\Icc)\mid k\ge 1\}.
\]
It is obvious that
\[
 \Gamma_i(\Icc)\ge \gamma_i(\Icc).
\]
This inequality is strict in general, as illustrated below.

\begin{example}
 Let $r\ge i+2$ and consider the chain $\Icc=(I_n)_{n\ge 1}$ with
 \[
  I_n=\begin{cases} \langle 0\rangle&\text{if }\ n<r,\\
                    \langle x_{1,i+1}x_{1,i+2},\; x_{1,i+1}x_{2,i+2},\dots,\; x_{1,i+1}x_{c,i+2}\rangle &\text{if }\ n=r,\\
                    \langle\Inc^i_{r,n}(I_r)\rangle &\text{if }\ n>r.
      \end{cases}
 \]
 Then $\ind^i(\Icc)=r$. One has $\gamma_i(\Icc)=1$, since $C=\{1\}$ is an $i$-cover of $I_r$. Now for ${\bf e}=(1,0,\dots,0)$ it is easily seen that
 \[
  I_{{\bf e},r+1}=\langle x_{1,i+3},\dots,\; x_{c,i+3},\; x_{1,i+2},\dots,\; x_{c,i+2}, \; x_{1,i+1},\dots,\; x_{c,i+1}\rangle.
 \]
 This gives $\gamma_{i+1}(\Icc_{\bf e})=c$, which implies $\Gamma_i(\Icc)=c$, because $\gamma_{i+1}(\Icc_{\bf e})\le \Gamma_i(\Icc)\le c$. Hence,  if $c>1$ then 
 \[
 c = \Gamma_i(\Icc)> \gamma_i(\Icc) =1, 
 \]
 and the difference is as large as possible. 
\end{example}

\Cref{min_primes} provides an efficient way to compute $\gamma_i(\Icc)$. It would be interesting to know a similar result for $\Gamma_i(\Icc)$.

\begin{question}
 Is there an alternate way to determine $\Gamma_i(\Icc)$? In particular, with notation of \Cref{min_primes}, is it true that
 \[
  \Gamma_i(\Icc)\in \min\{\#C\mid C\in \mathcal{C}_i(I_n)\} \quad\text{for some }n\ge \ind^i(\Icc)?
 \]
\end{question}

Let us now improve the lower bound in \Cref{pd both bounds} for chains of monomial ideals.

\begin{thm}
 \label{pd_bound}
 Let $\Icc=(I_n)_{n\ge 1}$ be an $\Inc^i$-invariant chain of  proper monomial ideals. Then there exists an integer $\tilde{D}(\Icc)$ such that
 \[
   \pd I_n\ge \Gamma_i(\Icc)n+\tilde{D} (\Icc)\quad\text{for }\ n\gg0.
 \]
\end{thm}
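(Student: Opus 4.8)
The plan is to transfer the linear lower bound of \Cref{codim} from a suitably chosen derived chain back to $\Icc$ itself.

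Since the sequence $\gamma_i(\Icc)\le\gamma_{i+1}^{\max}(\Icc)\le\gamma_{i+2}^{\max}(\Icc)\le\cdots$ is non-decreasing, integer-valued and bounded above by $c$, it is eventually constant; fix $k\ge1$ with $\gamma_{i+k}^{\max}(\Icc)=\Gamma_i(\Icc)$. By the definition of $\gamma_{i+k}^{\max}(\Icc)$ one may choose tuples $\be^{(1)}\in E(\Icc)$, $\be^{(2)}\in E(\Icc_{\be^{(1)}})$, $\dots$, $\be^{(k)}\in E(\Icc_{\be^{(1)},\dots,\be^{(k-1)}})$ realizing the maximum, so that the chain $\Jcc:=\Icc_{\be^{(1)},\dots,\be^{(k)}}$ satisfies $\gamma_{i+k}(\Jcc)=\Gamma_i(\Icc)$. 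By iterating \Cref{e chain}(1), $\Jcc$ is an $\Inc^{i+k}$-invariant chain of monomial ideals, and since each $\be^{(j)}$ lies in the respective set $E(\cdot)$, the ideal $J_n$ is proper for all $n$ past the stability index of $\Jcc$. Applying \Cref{codim} to $\Jcc$ and using $\pd(R_n/J_n)\ge\codim J_n$ (see \cite[Corollary 16.12]{BV}) then gives
\[
 \pd(R_n/J_n)\ \ge\ \gamma_{i+k}(\Jcc)\,n+D(\Jcc)\ =\ \Gamma_i(\Icc)\,n+D(\Jcc)\qquad\text{for }\ n\gg0.
\]

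Next I would prove the comparison $\pd(R_n/J_n)\le\pd(R_n/I_n)+ck$, which holds for every $n$. It suffices to treat a single step of the construction, that is, $\pd(R_n/I_{\be,n})\le\pd(R_n/I_n)+c$, and then iterate with $(\Icc,i)$ replaced successively by $(\Icc_{\be^{(1)}},i+1)$, $(\Icc_{\be^{(1)},\be^{(2)}},i+2)$, and so on. Putting $v=x_{1,i+1}^{e_1}\cdots x_{c,i+1}^{e_c}$ and $L=I_n:v$, we have $I_{\be,n}=\langle L,x_{1,i+1},\dots,x_{c,i+1}\rangle$; adjoining the $c$ variables one at a time and using the exact sequences
\[
 0\longrightarrow \big(R_n/(L':x)\big)(-1)\longrightarrow R_n/L'\longrightarrow R_n/\langle L',x\rangle\longrightarrow0
\]
(first map: multiplication by $x$), which yield $\pd(R_n/\langle L',x\rangle)\le\max\{\pd(R_n/L'),\,\pd(R_n/(L':x))+1\}$, the one-step estimate reduces to the inequality
\[
 \pd(R_n/(L:x))\le\pd(R_n/L)\qquad\text{for every monomial ideal }\ L\subseteq R_n\ \text{ and every variable }\ x.
\]
Applied repeatedly, once for each variable power dividing $v$, this inequality also gives $\pd(R_n/(I_n:v))\le\pd(R_n/I_n)$, which completes the one-step estimate.

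Granting this inequality, the two displays combine to $\pd(R_n/I_n)\ge\pd(R_n/J_n)-ck\ge\Gamma_i(\Icc)\,n+D(\Jcc)-ck$ for $n\gg0$; since $\pd I_n=\pd(R_n/I_n)-1$, the theorem follows with $\tilde D(\Icc)=D(\Jcc)-ck-1$. The main obstacle is therefore the inequality $\pd(R_n/(L:x))\le\pd(R_n/L)$ for monomial ideals. I would prove it by polarization, reducing to the case of a squarefree ideal $L=I_\Delta$: there $L:x_\ell$ is the Stanley--Reisner ideal of the (contractible) star of $\ell$ in $\Delta$, and Hochster's formula for the graded Betti numbers combined with a Mayer--Vietoris argument comparing the restricted complexes $(\operatorname{link}_\Delta(\ell))|_W$ with $\Delta|_{W\cup\{\ell\}}$ shows that every vertex set $W$ contributing to $\pd(R_n/(I_\Delta:x_\ell))$ is matched by a vertex set of size $|W|$ or $|W|+1$ contributing at least as much to $\pd(R_n/I_\Delta)$. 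A point needing care is that polarization is compatible with the colon operation only up to a harmless relabelling of variables.
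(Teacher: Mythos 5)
Your high-level strategy coincides with the paper's: pick $k$ with $\gamma_{i+k}^{\max}(\Icc)=\Gamma_i(\Icc)$, iterate the construction of \Cref{e chain} to pass to a derived $\Inc^{i+k}$-invariant chain, use \Cref{codim} together with $\pd \ge \codim$ on the derived chain, and transfer the bound back via the estimate that each colon-and-adjoin step raises $\pd(R_n/\,\cdot\,)$ by at most $c$. (Fixing a single optimal chain $\Jcc$ rather than tracking the maximum over all tuples, as the paper does in \Cref{pd chain}, is a harmless simplification.) Where you genuinely diverge is in the proof of the key inequality $\pd\bigl(R_n/(L:x)\bigr)\le\pd\bigl(R_n/L\bigr)$ for a monomial ideal $L$ and a variable $x$, which is the substance of \Cref{pd variable}. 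The paper gets this (and the companion bound $\pd(R_n/\langle L,x\rangle)\le\pd(R_n/L)+1$) by quoting the result of Caviglia--H\`a--Herzog--Kummini--Terai--Trung \cite[Corollary~3.3(i)]{CH+}, namely that $\pd(R_n/L)\in\{\pd(R_n/\langle L:x\rangle),\,\pd(R_n/\langle L,x\rangle)\}$, and then playing this off against the short exact sequence $0\to (R_n/\langle L:x\rangle)(-1)\to R_n/L\to R_n/\langle L,x\rangle\to 0$. You instead propose a self-contained topological argument: polarize to reduce to the squarefree case, observe that $I_\Delta:x_\ell=I_{\operatorname{star}_\Delta(\ell)}$, and use Hochster's formula plus the Mayer--Vietoris sequence of the cover $\Delta|_{W\cup\{\ell\}}=\Delta|_W\cup\operatorname{star}_{\Delta|_{W\cup\{\ell\}}}(\ell)$ (whose intersection is $(\operatorname{lk}_\Delta(\ell))|_W$, and whose second piece is a contractible cone) to show every contributing vertex set for $I_\Delta:x_\ell$ is matched by one for $I_\Delta$. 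This works: the long exact sequence gives that nonvanishing of $\tilde H_{|W|-j-1}\bigl((\operatorname{lk}_\Delta(\ell))|_W\bigr)$ forces nonvanishing of $\tilde H_{|W|-j-1}(\Delta|_W)$ or $\tilde H_{|W\cup\{\ell\}|-j-1}(\Delta|_{W\cup\{\ell\}})$. The trade-off is clear: the paper's route is a one-line citation, while yours is longer but self-contained and more transparent about why the inequality holds for monomial ideals in particular. Do be a little more careful than ``harmless relabelling'' when checking that polarization is compatible with $L\mapsto L:x$ --- the polarization of $L:x_j$ lives in a ring with one fewer $x_{j,\bullet}$-variable than $L^{\operatorname{pol}}:x_{j,1}$, but this is fine since adjoining a variable not appearing in the ideal does not change the projective dimension of the quotient.
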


This result gives the following necessary condition for eventual Cohen-Macaulayness of $\Inc^i$-invariant chains of monomial ideals:

\begin{cor}
      \label{cor:CM condition}
Let $\Icc=(I_n)_{n\ge 1}$ be an $\Inc^i$-invariant chain of proper ideals.  If there is a monomial order $\le$ respecting $\Inc^i$ and such that  $R_n/\ini_{\le} (I_n)$ is Cohen-Macaulay for $n \gg 0$, then
\[
\gamma_i (\Icc) = \Gamma_i(\ini_{\le} (\Icc)).
\]
\end{cor}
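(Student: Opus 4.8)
The plan is to derive the equality from a short comparison of two asymptotic estimates for $\pd \ini_{\le}(I_n)$: an exact value coming from the Cohen–Macaulay hypothesis combined with \Cref{codim}, and the lower bound furnished by \Cref{pd_bound}.

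First I would set up the standard reductions. Since each $I_n$ is proper, each $\ini_{\le}(I_n)$ is a proper monomial ideal of $R_n$, and by \Cref{lem_initial_filtration} the chain $\ini_{\le}(\Icc)=(\ini_{\le}(I_n))_{n\ge 1}$ is again $\Inc^i$-invariant. Moreover $\codim I_n=\codim \ini_{\le}(I_n)$ for all $n$, and by \Cref{dim} one has $\gamma_i(\Icc)=\gamma_i(\ini_{\le}(\Icc))$. Hence both \Cref{codim} and \Cref{pd_bound} apply to the chain $\ini_{\le}(\Icc)$ of proper monomial ideals.

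Next, choose $n\gg 0$ large enough that simultaneously: $R_n/\ini_{\le}(I_n)$ is Cohen–Macaulay; $\codim \ini_{\le}(I_n)=\gamma_i(\Icc)n+D(\ini_{\le}(\Icc))$ by \Cref{codim}; and $\pd \ini_{\le}(I_n)\ge \Gamma_i(\ini_{\le}(\Icc))n+\tilde{D}(\ini_{\le}(\Icc))$ by \Cref{pd_bound}. Because $R_n$ is Cohen–Macaulay, the quotient $R_n/\ini_{\le}(I_n)$ being Cohen–Macaulay forces $\ini_{\le}(I_n)$ to be perfect, so that $\pd(R_n/\ini_{\le}(I_n))=\codim \ini_{\le}(I_n)$ and therefore
\[
\pd \ini_{\le}(I_n)=\codim \ini_{\le}(I_n)-1=\gamma_i(\Icc)n+D(\ini_{\le}(\Icc))-1.
\]
Comparing with the lower bound gives $\gamma_i(\Icc)n+D(\ini_{\le}(\Icc))-1\ge \Gamma_i(\ini_{\le}(\Icc))n+\tilde{D}(\ini_{\le}(\Icc))$ for all $n\gg 0$; dividing by $n$ and letting $n\to\infty$ yields $\gamma_i(\Icc)\ge \Gamma_i(\ini_{\le}(\Icc))$. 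Since $\Gamma_i(\ini_{\le}(\Icc))\ge \gamma_i(\ini_{\le}(\Icc))=\gamma_i(\Icc)$ always holds, equality follows.

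Once \Cref{codim} and \Cref{pd_bound} are granted, the argument is essentially bookkeeping; the only step requiring a word of justification is the implication ``Cohen–Macaulay quotient $\Rightarrow$ perfect ideal'', which makes $\pd$ agree with $\codim$ on the nose and uses that $R_n$ is itself Cohen–Macaulay. I do not anticipate a genuine obstacle there.
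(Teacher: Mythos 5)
Your proof is correct and takes essentially the same approach as the paper: use the Cohen--Macaulay hypothesis to identify $\pd(R_n/\ini_{\le}(I_n))$ with $\codim \ini_{\le}(I_n)$, then compare the exact linear asymptotic of $\codim$ from \Cref{codim} against the linear lower bound from \Cref{pd_bound} and let $n\to\infty$. The paper's own proof is just a one-line condensation of exactly this argument.
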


\begin{proof}
Since $R_n/\ini_{\le} (I_n)$ is Cohen-Macaulay if and only if $\codim \ini_{\le} (I_n) = \pd R_n/\ini_{\le} (I_n)$, the result follows by combining Theorems \ref{codim} and \ref{pd_bound}.
\end{proof}

To prove \Cref{pd_bound} we need some auxiliary results. 

\begin{lem}
	\label{pd variable}
	Let $J \subseteq R_n$ be a monomial ideal and $x$ a variable of $R_n$. Then
	\[
	\max \{\pd \langle J : x\rangle,\; \pd \langle J, x\rangle-1\}\le\pd J \in \{\pd \langle J : x\rangle,\; \pd \langle J, x\rangle\}.
	\]
	Moreover, if $d\ge 1$ is an integer such that $J:x^{d}=J:x^{d+1}$, then
	\[
	\max \{\pd \langle(J : x^k),x\rangle\mid 0\le k\le d\}-1\le\pd J \in \{\pd \langle(J : x^d),x\rangle-1,\; \pd \langle(J:x^k), x\rangle\mid 0\le k< d\}.
	\]
\end{lem}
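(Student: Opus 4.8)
The plan is to prove both chains of (in)equalities by exploiting the short exact sequences attached to colon ideals. For the first statement, I would start from the standard exact sequence
\[
0 \to R_n/(J:x) \xrightarrow{\cdot x} R_n/J \to R_n/\langle J,x\rangle \to 0,
\]
which, after shifting, reads as a relation among $\langle J:x\rangle$, $\langle J\rangle$, and $\langle J,x\rangle$. From the long exact sequence of $\Tor$ (or directly from the depth lemma / Auslander--Buchsbaum) one extracts that $\pd(R_n/J)$ is bounded above by $\max\{\pd(R_n/(J:x)), \pd(R_n/\langle J,x\rangle)\}$ and that it equals one of the three relevant projective dimensions; rewriting in terms of $\pd J = \pd(R_n/J)-1$ and noting $\pd\langle J,x\rangle = \pd(R_n/\langle J,x\rangle)-1$ and $\pd\langle J:x\rangle = \pd(R_n/(J:x))-1$ gives the inequality $\max\{\pd\langle J:x\rangle, \pd\langle J,x\rangle-1\}\le \pd J$ and the membership $\pd J\in\{\pd\langle J:x\rangle, \pd\langle J,x\rangle\}$. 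The key elementary fact to pin down here is that in the exact sequence above, if the two outer modules have different projective dimensions then the middle one takes the larger value, while if they are equal the middle value is at most that common value plus nothing extra (because the connecting map can only drop the dimension); this is where one must be slightly careful about the edge cases $J=R_n$ or $J:x=R_n$, which are handled by the convention $\pd$ of a unit-quotient situation and by the fact that $\langle J,x\rangle$ is then still a proper monomial ideal.

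For the second, iterated statement I would set up the filtration coming from successive colons by $x$. When $J:x^d = J:x^{d+1}$, the chain
\[
J = J:x^0 \subseteq J:x^1 \subseteq \cdots \subseteq J:x^d = J:x^{d+1}
\]
stabilizes, and for each $0\le k < d$ one has the exact sequence relating $J:x^k$, $J:x^{k+1}$, and $\langle J:x^k, x\rangle$ via multiplication by $x$ (using $(J:x^k):x = J:x^{k+1}$ and, crucially, $\langle J:x^k,x\rangle = \langle (J:x^k), x\rangle$ since adjoining $x$ kills the distinction). For $k=d$ the module $R_n/(J:x^d)$ has $x$ as a nonzerodivisor (that is the content of $J:x^d = J:x^{d+1}$), so $\pd(R_n/(J:x^d)) = \pd(R_n/\langle (J:x^d), x\rangle) - 1$. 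The plan is then a downward induction on $k$ from $d$ to $0$: at each step apply the first part of the lemma to $J:x^k$ with the variable $x$, obtaining $\pd(J:x^k)\in\{\pd(J:x^{k+1}), \pd\langle(J:x^k),x\rangle\}$ together with the lower bound $\max\{\pd(J:x^{k+1}), \pd\langle(J:x^k),x\rangle-1\}\le \pd(J:x^k)$. Unwinding this recursion and substituting the base case at $k=d$ produces exactly the claimed two-sided statement: $\pd J$ lies in $\{\pd\langle(J:x^d),x\rangle-1\}\cup\{\pd\langle(J:x^k),x\rangle\mid 0\le k<d\}$, and it is bounded below by the maximum of $\pd\langle(J:x^k),x\rangle - 1$ over $0\le k\le d$.

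The main obstacle I anticipate is bookkeeping in the downward induction: one must track that the \emph{membership} assertion (which is an ``equals one of these'' statement) propagates correctly when composed across $d$ steps, since at each step $\pd(J:x^k)$ might equal either $\pd(J:x^{k+1})$ or the ``localized'' value $\pd\langle(J:x^k),x\rangle$. The clean way to handle this is to prove by induction on $d-k$ the statement ``$\pd(J:x^k) \in \{\pd\langle(J:x^d),x\rangle - 1\} \cup \{\pd\langle(J:x^j),x\rangle \mid k\le j<d\}$ and $\pd(J:x^k)\ge \max\{\pd\langle(J:x^j),x\rangle - 1\mid k\le j\le d\}$'', then specialize to $k=0$; the inductive step is just an application of the first part of the lemma plus the observation that $\pd\langle(J:x^k),x\rangle - 1 \ge \pd\langle(J:x^k),x\rangle - 1$ trivially keeps the new term in the max. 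Degenerate cases — $J$ already equal to $J:x$ (i.e.\ $d$ can be taken to be $0$, or $x$ is a nonzerodivisor mod $J$ from the start), or $\langle J,x\rangle = R_n$, or $J = R_n$ — should be checked separately but are immediate from the conventions already in force, exactly as in the proof of \Cref{variable}.
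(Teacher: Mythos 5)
Your overall strategy — use the short exact sequence
\[
0 \to \bigl(R_n/\langle J:x\rangle\bigr)(-1) \to R_n/J \to R_n/\langle J,x\rangle \to 0
\]
for the first assertion and then iterate it over the colon filtration $J:x^0\subseteq J:x^1\subseteq\cdots\subseteq J:x^d$ for the second — matches the paper's, and your downward induction for the iterated statement is handled correctly, including the observation that $x$ is a nonzerodivisor on $R_n/\langle J:x^d\rangle$ at the top of the tower.

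However, there is a genuine gap in your treatment of the first assertion. You try to extract the \emph{membership} statement $\pd J\in\{\pd\langle J:x\rangle,\;\pd\langle J,x\rangle\}$ from the exact sequence and the depth lemma / Auslander--Buchsbaum alone, asserting that ``if the two outer modules have different projective dimensions then the middle one takes the larger value, while if they are equal the middle value is at most that common value.'' The first half of that sentence is correct, but the second half is precisely where the exact-sequence argument breaks down: when $\pd\bigl(R_n/\langle J:x\rangle\bigr)=\pd\bigl(R_n/\langle J,x\rangle\bigr)$, the general three-term estimates only give $\pd(R_n/J)\le\pd\bigl(R_n/\langle J:x\rangle\bigr)$, and strict inequality is possible for arbitrary short exact sequences. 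The fact that strict inequality \emph{cannot} occur here is a nontrivial theorem about monomial ideals and variables, not a formal consequence of the long exact sequence of $\Tor$; the paper obtains it from Caviglia--H\`a--Herzog--Kummini--Terai--Trung \cite[Corollary 3.3(i)]{CH+} (a depth statement), translated via Auslander--Buchsbaum. Once the membership is granted, the lower bound $\max\{\pd\langle J:x\rangle,\;\pd\langle J,x\rangle-1\}\le\pd J$ does indeed follow by combining it with the exact sequence, exactly as you describe, and the rest of your argument goes through. So the fix is to replace the ``key elementary fact'' step with an appeal to (or a proof of) the result of \cite{CH+}.
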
 

\begin{proof}
 The containment in the first assertion follows from \cite[Corollary 3.3(i)]{CH+} and the Auslander-Buchsbaum formula, whereas the lower bound follows from the containment and the exact sequence 
\[
0 \to (R_n/\langle J : x\rangle)(-1) \to R_n/J \to R_n/\langle J, x\rangle \to 0. 
\]
Now applying the first assertion to the ideals $J : x^k$ for $k\ge0$ we get
\begin{equation}
\label{eq:lm47}
 \max \{\pd \langle J : x^{k+1}\rangle,\; \pd \langle (J:x^k), x\rangle-1\}\le\pd \langle J:x^k\rangle \in \{\pd \langle J : x^{k+1}\rangle,\; \pd \langle (J:x^k), x\rangle\}.
\end{equation}
Since $J:x^{d}=J:x^{d+1}$, $x$ is a non-zero-divisor on $R_n/\langle J:x^d\rangle$, which gives
\[
 \pd \langle J : x^{d+1}\rangle=\pd \langle J:x^d\rangle=\pd \langle (J:x^d), x\rangle-1.
\]
Combining this with \eqref{eq:lm47} for $k=0,\dots,d$ yields the second assertion.
\end{proof}

\begin{lem}
 \label{pd chain}
 Let $\Icc=(I_n)_{n\ge 1}$ be an $\Inc^i$-invariant chain of proper monomial ideals. For each ${\bf e}\in \mathbb{Z}^c_{\ge0}$ consider the chain $\Icc_{\bf e}=(I_{{\bf e},n})_{n\ge 1}$ as in \Cref{e chain}. Let $E(\Icc)$ be defined as in \Cref{E}. Then
 \[
  \max \{\pd I_{{\bf e},n}\mid {\bf e}\in E(\Icc)\}-c\le\pd I_n \le \max \{\pd I_{{\bf e},n}\mid {\bf e}\in E(\Icc)\}\quad\text{for all }\ n\ge r+1.
 \]
\end{lem}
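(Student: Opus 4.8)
The plan is to derive both inequalities by peeling off the $c$ variables $y_k:=x_{k,i+1}$ one at a time via the second assertion of \Cref{pd variable}, and then to match the resulting maxima with $\max\{\pd I_{{\bf e},n}\mid {\bf e}\in E(\Icc)\}$. As a preliminary I would check that, for $n\ge r+1$, one has $I_{{\bf e},n}=R_n$ if and only if $I_{{\bf e},r+1}=R_{r+1}$, so that $E(\Icc)=\{{\bf e}\in\mathbb{Z}^c_{\ge0}\mid I_{{\bf e},n}\ne R_n\}$ for every such $n$: by \Cref{e chain}(i) the chain $\Icc_{\bf e}$ is $\Inc^{i+1}$-invariant with $\ind^{i+1}(\Icc_{\bf e})\le r+1$, hence $I_{{\bf e},n}=\langle\Inc^{i+1}_{r+1,n}(I_{{\bf e},r+1})\rangle$ for $n\ge r+1$; since $\Inc^{i+1}$ carries non-constant monomials to non-constant monomials, $I_{{\bf e},n}$ is proper whenever $I_{{\bf e},r+1}$ is, and the converse follows from $I_{{\bf e},r+1}\subseteq I_{{\bf e},n}$. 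Also $E(\Icc)\ne\emptyset$, since $I_{{\bf 0},r+1}=\langle I_{r+1},x_{1,i+1},\dots,x_{c,i+1}\rangle$ is proper. Throughout I rely on the fact that \Cref{pd variable} holds for arbitrary (not necessarily proper) monomial ideals, using the convention $\pd 0=-\infty$.

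The core step is to prove, by induction on $p$, that for every monomial ideal $J\subseteq R_n$ (with $n\ge r+1$ fixed) and all distinct variables $z_1,\dots,z_p$ of $R_n$,
\[
\max_{{\bf k}\in\mathbb{Z}^p_{\ge0}}\pd\langle(J:z_1^{k_1}\cdots z_p^{k_p}),z_1,\dots,z_p\rangle-p\;\le\;\pd J\;\le\;\max_{{\bf k}\in\mathbb{Z}^p_{\ge0}}\pd\langle(J:z_1^{k_1}\cdots z_p^{k_p}),z_1,\dots,z_p\rangle,
\]
the maxima being finite because the colon chains stabilize. The base case $p=1$ is exactly the second assertion of \Cref{pd variable}: choosing $d$ with $J:z_1^{d}=J:z_1^{d+1}$, the maxima occurring there may be taken over all $k\ge0$ since $J:z_1^{k}$ is independent of $k$ for $k\ge d$. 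For the inductive step I would peel $z_1$ using the case $p=1$, then apply the case $p-1$ to each ideal $\langle(J:z_1^{k_1}),z_1\rangle$ with the variables $z_2,\dots,z_p$, using the monomial-ideal identity
\[
\bigl\langle(\langle(J:z_1^{k_1}),z_1\rangle:z_2^{k_2}\cdots z_p^{k_p}),z_2,\dots,z_p\bigr\rangle=\langle(J:z_1^{k_1}\cdots z_p^{k_p}),z_1,\dots,z_p\rangle,
\]
which holds because colon by a monomial distributes over sums of monomial ideals and $\langle z_\ell\rangle:z_j^{k}=\langle z_\ell\rangle$ for $\ell\ne j$. The upper bound then accumulates no loss at any stage, while the lower bound loses $1$ in the $z_1$-peel and $p-1$ from the inductive hypothesis, giving $-p$ in total. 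Taking $J=I_n$, $p=c$, $z_k=x_{k,i+1}$, and noting $\langle(I_n:x_{1,i+1}^{k_1}\cdots x_{c,i+1}^{k_c}),x_{1,i+1},\dots,x_{c,i+1}\rangle=I_{{\bf k},n}$, one obtains
\[
\max_{{\bf k}\in\mathbb{Z}^c_{\ge0}}\pd I_{{\bf k},n}-c\;\le\;\pd I_n\;\le\;\max_{{\bf k}\in\mathbb{Z}^c_{\ge0}}\pd I_{{\bf k},n}.
\]

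Finally, by the preliminary reduction the distinct ideals $I_{{\bf k},n}$ that are proper are precisely those with ${\bf k}\in E(\Icc)$; for the remaining ${\bf k}$ one has $I_{{\bf k},n}=R_n$, contributing $\pd 0=-\infty$, and since $E(\Icc)\ne\emptyset$ both maxima above equal $\max\{\pd I_{{\bf e},n}\mid {\bf e}\in E(\Icc)\}$, which is the claim. I expect the main obstacle to be the bookkeeping in the inductive step: stating and verifying the monomial-ideal identity above so that one knows precisely which ideal results from successive ``colon-by-a-power-and-adjoin-a-variable'' operations, and confirming that the unit losses in the lower estimates telescope to exactly $c$ and no more. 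The rest is a routine invocation of \Cref{pd variable} and \Cref{e chain}.
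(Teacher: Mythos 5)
Your proof is correct and follows essentially the same route as the paper's: peel off the $c$ variables $x_{1,i+1},\dots,x_{c,i+1}$ one at a time using the second assertion of \Cref{pd variable}. The paper phrases this as ``by induction on $c$ it suffices to consider the case $c=1$,'' which is somewhat terse; your formulation---an explicit induction on the number $p$ of peeled variables, with the colon identity $\bigl\langle(\langle(J:z_1^{k_1}),z_1\rangle:z_2^{k_2}\cdots z_p^{k_p}),z_2,\dots,z_p\bigr\rangle=\langle(J:z_1^{k_1}\cdots z_p^{k_p}),z_1,\dots,z_p\rangle$ spelled out---makes the bookkeeping precise and is a nice cleanup. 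One small slip: for ${\bf k}\notin E(\Icc)$, you have $I_{{\bf k},n}=R_n$, so $\pd I_{{\bf k},n}=\pd R_n=0$ (the unit ideal is a free module), not $\pd 0=-\infty$ (that would be for the zero module). Since $E(\Icc)\ne\emptyset$ and each $\pd I_{{\bf e},n}\ge 0$ for ${\bf e}\in E(\Icc)$, this doesn't affect either inequality. Relatedly, your claim that \Cref{pd variable} holds for ``arbitrary'' monomial ideals isn't quite right for $J=0$ (the lower bound would fail), but the ideals appearing in your argument always contain a variable after the first peel, so this never arises.
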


\begin{proof}
 By induction on $c$ it suffices to consider the case $c=1$. Let $n\ge r+1$ and choose an integer $d\ge1$ such that $I_n:x_{1,i+1}^d=I_n:x_{1,i+1}^{d+1}$. Then \Cref{pd variable} gives
 \begin{equation}
 \begin{aligned}
  \max &\{\pd I_{{\bf e},n}\mid 0\le {\bf e}\le d\}-1=\max \{\pd \langle(I_n:x_{1,i+1}^{\bf e}),x_{1,i+1}\rangle\mid 0\le {\bf e}\le d\}-1\\
  &\le\pd I_n \le \max\{\pd \langle(I_n:x_{1,i+1}^{\bf e}),x_{1,i+1}\rangle\mid 0\le {\bf e}\le d\}=\max \{\pd I_{{\bf e},n}\mid 0\le {\bf e}\le d\}.
  \end{aligned}
 \end{equation}
 So to complete the proof we need to show that
 \[
  \max\{\pd I_{{\bf e},n}\mid 0\le {\bf e}\le d\}=\max\{\pd I_{{\bf e},n}\mid {\bf e}\in E(\Icc)\}.
 \]
Note that $I_{{\bf e},n}=I_{d,n}$ for all ${\bf e}\ge d$ since $I_n:x_{1,i+1}^d=I_n:x_{1,i+1}^{d+1}$. One the other hand, $I_{{\bf e},n}=R_n$ has projective dimension $0$ for all ${\bf e}\in \Z_{\ge0}\setminus E(\Icc)$. Therefore,
\[
  \max\{\pd I_{{\bf e},n}\mid 0\le {\bf e}\le d\}=\max\{\pd I_{{\bf e},n}\mid {\bf e}\in \Z_{\ge0}\}=\max\{\pd I_{{\bf e},n}\mid {\bf e}\in E(\Icc)\}.\qedhere
\]
\end{proof}

Now we prove \Cref{pd_bound}.

\begin{proof}[Proof of \Cref{pd_bound}]
 Let $k\ge 1$ be such that $\Gamma_i(\Icc)=\gamma_{i+k}^{\max}(\Icc)$. Applying \Cref{pd chain} iteratively we obtain
 \[
  \pd I_n\ge \max \{\pd I_{{\bf e}_1,\dots,{\bf e}_k,n}\mid {\bf e}_1\in E(\Icc),\dots,{\bf e}_k\in E(\Icc_{{\bf e}_1,\dots,{\bf e}_{k-1}})\}-kc
 \]
 for all $n\gg0$. By \Cref{codim},
 \[
  \pd I_{{\bf e}_1,\dots,{\bf e}_k,n}\ge \codim I_{{\bf e}_1,\dots,{\bf e}_k,n} - 1 =\gamma_{i+k}(\Icc_{{\bf e}_1,\dots,{\bf e}_{k}})n+D(\Icc_{{\bf e}_1,\dots,{\bf e}_{k}}) -1 \quad \text{for } n\gg0.
 \]
 So if we set
 \[
  \tilde{D} (\Icc)=\min\{D(\Icc_{{\bf e}_1,\dots,{\bf e}_{k}}) -1 \mid {\bf e}_1\in E(\Icc),\dots,{\bf e}_k\in E(\Icc_{{\bf e}_1,\dots,{\bf e}_{k-1}})\}-kc,
 \]
 then it follows that
 \[\begin{aligned}
  \pd I_n&\ge \max \{\gamma_{i+k}(\Icc_{{\bf e}_1,\dots,{\bf e}_{k}})\mid {\bf e}_1\in E(\Icc),\dots,{\bf e}_k\in E(\Icc_{{\bf e}_1,\dots,{\bf e}_{k-1}})\}n+\tilde{D}(\Icc)\\
  &=\gamma_{i+k}^{\max}(\Icc)n+\tilde{D}(\Icc)=\Gamma_i(\Icc)n+\tilde{D}(\Icc) 
   \end{aligned}
 \]
 for all $n\gg0$. 
\end{proof}

Next, we discuss another improvement of the lower bound given in \Cref{pd both bounds}. Assume $r\ge \ind^i(\Icc)$ and let $G(I_r)$ be the minimal set of monomial generators of $I_r$. Evidently, every monomial $u\in G_i(I_r)$ can be uniquely written as $u=u_1u_2$ with $\max(u_1)\le i$, $\min(u_2)> i$. Set
\[
 {G}_{i,1}(I_r)=\{u_1\mid u\in G_i(I_r)\}\quad\text{and}\quad{G}_{i,2}(I_r)=\{u_2\mid u\in G_i(I_r)\}.
\]
Observe that the sets $G_i^-(I_r)$ and ${G}_{i,1}(I_r)$ are fixed under the action of $\Inc^i$, whereas  $G_i^+(I_r)$ and ${G}_{i,2}(I_r)$ usually change. So intuitively, one would expect that the growth of $\pd I_n$ depends on $G_i^+(I_r)$ and ${G}_{i,2}(I_r)$. This will be clarified now.

For a subset $M$ of $G_i(I_r)$, set
\[
 v_M=\prod_{u\in M} u_1\in R_i
\]
and consider the chain 
\[
\Icc:v_M=(I_n:v_M)_{n\ge 1}. 
\] 
According to \Cref{colon chain}, $\Icc:v_M$ is an $\Inc^i$-invariant chain with $\ind^i(\Icc:v_M)\le \ind^i(\Icc)$. Assume that the monomial $v_M$ is not divisible by any element of $G_i^{-}(I_r)$. Then $I_r:v_M\ne R_r$ and $G_i^+(I_r:v_M)$ consists of minimal elements (under divisibility) of $G_i^{+}(I_r)\cup\{u_2\mid u\in M\}$. It follows that
\[
 \gamma_i(\Icc)=\gamma_i(\langle G_i^{+}(I_r)\rangle)\le \gamma_i(\langle G_i^{+}(I_r)\cup\{u_2\mid u\in M\}\rangle)=\gamma_i(\langle G_i^{+}(I_r:v_M)\rangle)=\gamma_i(\Icc:v_M).
\]
Thus, the following lower bound also improves the one in \Cref{pd both bounds}.

\begin{thm}
 \label{pd bound 2}
 Let $\Icc=(I_n)_{n\ge 1}$ be an $\Inc^i$-invariant chain of monomial ideals. Let $r\ge \ind^i(\Icc)$ and denote by $\mathcal{M}$  the set of all subsets $M$ of $G_i(I_r)$ such that $v_M$ is not divisible by any element of $G_i^{-}(I_r)$. Then there exists an integer $\tilde{D} (\Icc)$ such that
 \[
  \pd I_n\ge \max\{\gamma_i(\Icc:v_M)\mid M\in \mathcal{M}\}n+\tilde{D}(\Icc)\quad\text{for }\ n\gg0.
 \]
\end{thm}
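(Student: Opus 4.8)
The plan is to derive this bound from \Cref{pd both bounds} applied not to the chain $\Icc$ itself, but to each of the colon chains $\Icc:v_M$ for $M\in\mathcal M$, and then to transfer each resulting lower bound back to $\pd I_n$ via a comparison of projective dimensions under colon by a monomial fixed by the group action. The starting point is the elementary observation that for a monomial $v\in R_n$ one always has $\pd(R_n/(I_n:v))\le \pd(R_n/I_n)$ whenever $I_n:v\neq R_n$; indeed, $I_n:v$ is a (monomial) localization-type quotient of $I_n$, and more concretely one can iterate the first assertion of \Cref{pd variable}: writing $v=x_1\cdots x_s$ as a product of (not necessarily distinct) variables and passing from $I_n$ to $I_n:x_1$, then to $(I_n:x_1):x_2$, etc., each step gives $\pd\langle J:x\rangle\le\pd J$ by \Cref{pd variable}. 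Hence $\pd(I_n:v_M)\le\pd I_n$ for every $M\in\mathcal M$ and every $n$ large enough that $I_n:v_M$ is proper (which holds for $n\ge r$ since $v_M$ is not divisible by any element of $G_i^-(I_r)$, so $I_r:v_M\neq R_r$, and the chain $\Icc:v_M$ stabilizes by \Cref{colon chain}).

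Next, for each fixed $M\in\mathcal M$, apply \Cref{pd both bounds} to the $\Inc^i$-invariant chain $\Icc:v_M$ (which is $\Inc^i$-invariant of stability index at most $r$ by \Cref{colon chain}, and consists of proper ideals for $n\gg0$): there is an integer $D(\Icc:v_M)$ with
\[
\pd(I_n:v_M)\ \ge\ \gamma_i(\Icc:v_M)\,n + \big(D(\Icc:v_M)-1\big)\quad\text{for }n\gg0,
\]
using $\pd(I_n:v_M)=\pd\big(R_n/(I_n:v_M)\big)-1\ge\codim(I_n:v_M)-1$ together with the linear formula for $\codim$ from \Cref{codim}, exactly as in the proof of \Cref{pd_bound}. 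Combining this with $\pd I_n\ge\pd(I_n:v_M)$ gives, for each $M$,
\[
\pd I_n\ \ge\ \gamma_i(\Icc:v_M)\,n + \big(D(\Icc:v_M)-1\big)\quad\text{for }n\gg0.
\]
Since $\mathcal M$ is finite, we may set
\[
\tilde D(\Icc)=\min\{\,D(\Icc:v_M)-1\mid M\in\mathcal M\,\}
\]
and take the maximum over $M\in\mathcal M$ of the right-hand sides, obtaining $\pd I_n\ge \max\{\gamma_i(\Icc:v_M)\mid M\in\mathcal M\}\,n+\tilde D(\Icc)$ for all $n\gg0$, as claimed.

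The part that needs care — though it is already essentially handled by the discussion preceding the theorem statement — is the bookkeeping that shows the bound is genuinely an improvement, i.e.\ that $\gamma_i(\Icc)\le\gamma_i(\Icc:v_M)$ for each $M\in\mathcal M$; this follows because, as noted in the excerpt, $G_i^+(I_r:v_M)$ consists of the minimal elements under divisibility of $G_i^+(I_r)\cup\{u_2\mid u\in M\}$, so that $\langle G_i^+(I_r)\rangle\subseteq\langle G_i^+(I_r:v_M)\rangle$ and \Cref{i cover properties}(iii) applies. The only genuine obstacle in the proof itself is verifying the monotonicity $\pd(I_n:v)\le\pd I_n$ cleanly for a general monomial $v\in R_i$; the cleanest route is the iterated application of \Cref{pd variable} sketched above, after which everything reduces to assembling the per-$M$ linear bounds and taking a finite maximum.
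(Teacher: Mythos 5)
Your proof is correct and takes essentially the same approach as the paper: the paper's entire proof is the one-liner ``using the lower bound in \Cref{pd variable} repeatedly, one gets $\pd I_n \ge \pd(I_n : v_M)$,'' leaving implicit the application of \Cref{pd both bounds} (via \Cref{codim}) to each colon chain $\Icc : v_M$ and the finite maximum over $\mathcal M$, which you have simply spelled out in full.
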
 

\begin{proof}
Using the lower bound in \Cref {pd variable} repeatedly, one gets $\pd I_n \ge \pd (I_n : v_M)$. 
\end{proof}

Let us briefly compare the bounds in Theorems \ref{pd_bound} and \ref{pd bound 2}. Observe that $\Gamma_i(\Icc)$ only depends on $G_i^{+}(I_r)$, while the bound in \Cref{pd bound 2} depends on $G_i^+(I_r)$ and ${G}_{i,2}(I_r)$ (and also on $G_i^-(I_r)$ and ${G}_{i,1}(I_r)$). So for instance, the bound in \Cref{pd_bound} is potentially better if $G_i(I_r)=\emptyset$, while the one in \Cref{pd bound 2} is potentially better if $G_i^+(I_r)=\emptyset$.

As an immediate consequence of \Cref{pd bound 2}, we obtain the following bound, which depends only on $G_i^+(I_r)$ and ${G}_{i,2}(I_r)$.

\begin{cor} 
  \label{cor:weaker}
 Let $\Icc=(I_n)_{n\ge 1}$ be an $\Inc^i$-invariant chain of monomial ideals with $\ind^i(\Icc)\le r$. Assume that
 \[
 v_{G_i(I_r)}=\prod_{u\in G_i(I_r)} u_1
\]
is not divisible by any element of $G_i^{-}(I_r)$. Then there exists a constant $\tilde{D}(\Icc)$ such that
 \[
  \pd I_n\ge \gamma_i(\langle G_i^{+}(I_r)\cup {G}_{i,2}(I_r)\rangle)n+\tilde{D}(\Icc)\quad\text{for }\ n\gg0.
 \]
\end{cor}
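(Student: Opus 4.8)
The plan is to derive \Cref{cor:weaker} from \Cref{pd bound 2} by choosing the single subset $M = G_i(I_r)$ in the set $\mathcal{M}$ and showing that this choice yields the claimed bound. First I would observe that, by the hypothesis, $v_{G_i(I_r)}$ is not divisible by any element of $G_i^{-}(I_r)$; hence $M = G_i(I_r)$ belongs to the set $\mathcal{M}$ appearing in \Cref{pd bound 2}. Consequently
\[
\max\{\gamma_i(\Icc:v_M)\mid M\in \mathcal{M}\}\ge \gamma_i(\Icc:v_{G_i(I_r)}),
\]
and so it suffices to identify the right-hand side with $\gamma_i(\langle G_i^{+}(I_r)\cup {G}_{i,2}(I_r)\rangle)$.

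The key computation is the description of $G_i^+(I_r : v_{G_i(I_r)})$. I would argue, exactly as in the paragraph preceding \Cref{pd bound 2}, that since $v_{G_i(I_r)}\in R_i$ is not divisible by any element of $G_i^-(I_r)$, one has $I_r : v_{G_i(I_r)}\ne R_r$, and the set $G_i^+(I_r : v_{G_i(I_r)})$ consists precisely of the minimal monomials (under divisibility) of $G_i^+(I_r)\cup\{u_2\mid u\in G_i(I_r)\}= G_i^+(I_r)\cup G_{i,2}(I_r)$. Indeed, colon-ing by $v_{G_i(I_r)}$ strips off the $R_i$-part $u_1$ of each generator $u=u_1u_2\in G_i(I_r)$, turning it into the ``high'' monomial $u_2$ with $\min(u_2)>i$; the generators in $G_i^+(I_r)$ are unaffected since they involve no variables $x_{k,j}$ with $j\le i$; and the generators in $G_i^-(I_r)$ either do not divide $v_{G_i(I_r)}$ (contributing nothing to $G_i^+$ of the colon, as they stay in the ``low'' part) — here one uses the non-divisibility hypothesis to ensure the colon is proper and no new low-degree generator forces the ideal to become everything. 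Passing to the ideal generated by these minimal elements and using \Cref{i cover properties}(iii) together with the fact that $\gamma_i$ of an ideal depends only on its monomial generating set, I get
\[
\gamma_i(\Icc:v_{G_i(I_r)})=\gamma_i(\langle G_i^{+}(I_r : v_{G_i(I_r)})\rangle)=\gamma_i(\langle G_i^{+}(I_r)\cup G_{i,2}(I_r)\rangle).
\]
Combining the two displays and invoking \Cref{pd bound 2} with the constant $\tilde{D}(\Icc)$ supplied there finishes the argument.

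The only subtle point — and the step I expect to need the most care — is the verification that $G_i^+(I_r : v_{G_i(I_r)})$ is exactly the set of minimal elements of $G_i^+(I_r)\cup G_{i,2}(I_r)$, rather than merely containing or being contained in it. Specifically, one must check that colon-ing by the product $v_{G_i(I_r)}$ of all the low parts does not accidentally introduce extra ``high'' generators coming from unexpected divisibility relations among the $u_1$'s (e.g.\ when one $u_1$ divides $v_{G_i(I_r)}/u_1'$ for a different generator), and does not collapse the ideal; this is precisely what the hypothesis on non-divisibility by $G_i^-(I_r)$ is designed to control. Once that description is pinned down, everything else is formal bookkeeping with the already-established properties of $\gamma_i$ and the bound in \Cref{pd bound 2}.
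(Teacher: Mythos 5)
Your proof is correct and matches the paper's approach: the corollary is stated in the paper as an immediate consequence of \Cref{pd bound 2}, obtained exactly as you do by specializing to the single choice $M = G_i(I_r)$ in $\mathcal{M}$ and invoking the computation of $G_i^+(I_r : v_M)$ from the paragraph preceding that theorem. Your careful verification of the identification $G_i^+(I_r : v_{G_i(I_r)})$ with the minimal elements of $G_i^+(I_r)\cup G_{i,2}(I_r)$ is exactly the claim the paper asserts there, so you have correctly unpacked the intended argument.
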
 

Note that  the assumption of \Cref{cor:weaker} is satisfied if $G_i^-(I_r)=\emptyset$. 

Similarly to \Cref{cor:CM condition}, one gets from \Cref{pd bound 2} another necessary condition for eventual Cohen-Macaulayness of $\Inc^i$-invariant chains. We only state a version for monomial ideals and leave the more general statement to the interested reader. 

\begin{cor}
     \label{cor:CM obsrruction}
 With assumption as in \Cref{pd bound 2}, if $R_n/I_n$ is Cohen-Macaulay for $n \gg 0$, then
\[
\gamma_i(\Icc) = \max\{\gamma_i(\Icc:v_M)\mid M\in \mathcal{M}\}.
\]
\end{cor}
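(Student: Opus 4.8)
The plan is to derive \Cref{cor:CM obsrruction} as a direct combination of the codimension formula (\Cref{codim}) and the projective dimension bound (\Cref{pd bound 2}). First I would recall the standard inequality $\pd(R_n/I_n)\ge\codim I_n$, with equality for all $n$ precisely when $R_n/I_n$ is Cohen--Macaulay; this is the defining property of perfect ideals used already in \Cref{CM}. So the hypothesis ``$R_n/I_n$ is Cohen--Macaulay for $n\gg 0$'' translates to $\pd I_n=\pd(R_n/I_n)-1=\codim I_n-1$ for $n\gg 0$.

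Next I would invoke \Cref{codim}, which gives $\codim I_n=\gamma_i(\Icc)n+D(\Icc)$ for $n\gg 0$, hence $\pd I_n=\gamma_i(\Icc)n+(D(\Icc)-1)$ for $n\gg 0$ under the Cohen--Macaulay assumption. On the other hand, \Cref{pd bound 2} supplies the lower bound $\pd I_n\ge\bigl(\max\{\gamma_i(\Icc:v_M)\mid M\in\mathcal{M}\}\bigr)n+\tilde D(\Icc)$ for $n\gg 0$. Comparing the leading coefficients of these two eventually-valid estimates forces
\[
\gamma_i(\Icc)\ge\max\{\gamma_i(\Icc:v_M)\mid M\in\mathcal{M}\}.
\]
The reverse inequality $\gamma_i(\Icc)\le\gamma_i(\Icc:v_M)$ for every $M\in\mathcal{M}$ was already established in the discussion preceding \Cref{pd bound 2} (using \Cref{colon chain} together with the observation that $G_i^+(I_r:v_M)$ consists of the minimal elements of $G_i^+(I_r)\cup\{u_2\mid u\in M\}$, so an $i$-cover of this larger set is an $i$-cover of $I_r$), and taking the maximum over $M\in\mathcal{M}$ gives $\gamma_i(\Icc)\le\max\{\gamma_i(\Icc:v_M)\mid M\in\mathcal{M}\}$. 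Combining the two inequalities yields the claimed equality.

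The argument is essentially bookkeeping; the only subtlety is making sure one may legitimately compare two linear estimates that are each only asserted for $n\gg 0$ — but since both hold simultaneously for all sufficiently large $n$, a linear function bounded below by another linear function on an infinite set of integers must have leading coefficient at least as large, so there is no real obstacle. One should also note explicitly that $I_n$ is proper for $n\gg 0$ whenever $R_n/I_n$ is Cohen--Macaulay (a Cohen--Macaulay ring is nonzero by convention), so that $\gamma_i(\Icc)<\infty$ and all the invoked results apply; this is the one place to be slightly careful with the conventions of \Cref{def:i cover} and \Cref{conven}.
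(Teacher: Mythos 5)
Your argument is correct and matches the paper's implicit proof: the paper indicates that \Cref{cor:CM obsrruction} follows ``Similarly to \Cref{cor:CM condition},'' i.e.\ by combining \Cref{codim} with the lower bound from \Cref{pd bound 2} under the Cohen--Macaulay hypothesis to get $\gamma_i(\Icc)\ge\max\{\gamma_i(\Icc:v_M)\mid M\in\mathcal{M}\}$, and using the inequality $\gamma_i(\Icc)\le\gamma_i(\Icc:v_M)$ established in the discussion preceding \Cref{pd bound 2} for the reverse direction. Your remarks about comparing two eventually-linear estimates and about properness of $I_n$ are sound and consistent with the paper's conventions.
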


To conclude this section, we consider the case $c=1$, i.e., there is only one row of variables. The next result shows that \Cref{conj} is ``nearly true'' in this case.

\begin{prop}
 Assume $c=1$. Let $\Icc=(I_n)_{n\ge 1}$ be an $\Inc^i$-invariant chain of proper monomial ideals. Then either $\pd (R_n/I_n)$ is eventually a constant or there exists a nonnegative integer $D$ such that
 \[
  n-D\le \pd (R_n/I_n) \le n\quad \text{for all }\ n\gg0.
 \]
\end{prop}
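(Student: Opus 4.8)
The plan is to treat the case $c=1$ directly, using the combinatorial description of $\Inc^i$-invariant chains of monomial ideals together with the structural results already established. When $c=1$, a proper monomial ideal $I_n \subseteq R_n = K[x_{1,1},\dots,x_{1,n}]$ has the property that every $i$-cover is either empty or the single set $\{1\}$; hence $\gamma_i(\mathcal{I}) \in \{0,1\}$. By \Cref{codim}, $\codim I_n = \gamma_i(\mathcal{I})n + D(\mathcal{I})$ for $n\gg 0$, so either $\codim I_n$ is eventually constant (when $\gamma_i(\mathcal{I})=0$) or $\codim I_n = n + D(\mathcal{I})$ eventually (when $\gamma_i(\mathcal{I})=1$). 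Since $cn = n \ge \pd(R_n/I_n) \ge \codim I_n$ by \Cref{pd both bounds}, the latter case forces $n + D(\mathcal{I}) \le \pd(R_n/I_n) \le n$ for $n\gg 0$, giving the second alternative with $D = -D(\mathcal{I}) \ge 0$ (the nonnegativity being automatic since $\pd(R_n/I_n)\le n$). So the whole statement reduces to showing: \emph{if $\gamma_i(\mathcal{I})=0$, then $\pd(R_n/I_n)$ is eventually constant.}

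For this remaining case, note $\gamma_i(\mathcal{I})=0$ means $G_i^+(I_r) = \emptyset$ for $r \ge \ind^i(\mathcal{I})$; that is, every minimal generator of $I_r$ involves at least one variable among $x_{1,1},\dots,x_{1,i}$. I would then invoke \Cref{colon chain} and the stabilization structure: write $I_r$ in terms of its generators and analyze how $I_n$ is built from $\langle \Inc^i_{r,n}(I_r)\rangle$. When $\gamma_i(\mathcal{I}) = 0$, I expect that after fixing the finitely many variables $x_{1,1},\dots,x_{1,i}$, the "new" variables $x_{1,i+1},\dots,x_{1,n}$ enter only in a controlled, eventually-periodic way, so that the minimal free resolution of $R_n/I_n$ stabilizes in length. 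Concretely, one can iterate the short exact sequences from \Cref{pd variable} applied to the variables $x_{1,i+1}, x_{1,i+2},\dots$ in turn: colon and sum operations by these variables, combined with $\Inc$-invariance and \Cref{e chain}, reduce $\pd(R_n/I_n)$ to a bounded combination of projective dimensions of ideals in the shifted chains $\mathcal{I}_{\mathbf e}$, and the fact that $\gamma_i(\mathcal{I}) = 0$ should propagate to keep all relevant $\codim$ values (hence, via perfectness-type bounds or direct resolution arguments) bounded.

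Alternatively — and this may be cleaner — I would use \Cref{e chain}(ii): for $n \ge r$, once $\gamma_i(\mathcal{I})=0$ one can try to show $R_{n+1}/I_{n+1} \cong R_n/I_n \otimes_K K[x_{1,n+1}]$ (a polynomial extension in the single new variable), which would immediately give $\pd(R_{n+1}/I_{n+1}) = \pd(R_n/I_n)$, i.e. eventual constancy. This isomorphism should follow from the generators of $I_{n+1}$ only involving variables of "old" index once we are past the stability index and $G_i^+$ is empty — every new generator obtained by applying $\pi\in\Inc^i_{n,n+1}$ still uses a variable $x_{1,j}$ with $j\le i$, which $\pi$ fixes, so $x_{1,n+1}$ never appears in a minimal generator of $I_{n+1}$. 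That makes $x_{1,n+1}$ a free variable over $R_n/I_n$, and the claim follows. The main obstacle is making the dichotomy airtight: I must be certain that $\gamma_i(\mathcal I)=0$ genuinely forces $x_{1,n+1}\notin \operatorname{supp}(G(I_{n+1}))$ for all $n\ge\ind^i(\mathcal I)$ — this needs the observation that $\Inc^i$ fixes the first $i$ indices and that, past stability, $I_{n+1}=\langle\Inc^i_{n,n+1}(I_n)\rangle$, so any generator of $I_{n+1}$ divides $\pi(u)$ for some $u\in G(I_n)$ and $\pi\in\Inc^i_{n,n+1}$; since $u$ contains some $x_{1,j}$ with $j\le i$ and $\pi(j)=j$, the generator contains $x_{1,j}$ too and cannot be a pure power of or otherwise forced to involve $x_{1,n+1}$. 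Handling the edge behavior for small $n$ (before stability) is routine and absorbed into the "$n \gg 0$" quantifier.
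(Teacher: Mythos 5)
Your reduction to the case $\gamma_i(\Icc)=0$ is correct, and your treatment of the case $\gamma_i(\Icc)=1$ (via \Cref{codim} and \Cref{pd both bounds}, noting that the upper bound $\pd(R_n/I_n)\le n$ forces $D(\Icc)\le 0$) matches the paper's Case 2. However, the remaining claim --- that $\gamma_i(\Icc)=0$ forces $\pd(R_n/I_n)$ to be eventually constant --- is false, and the step where you deduce that ``$x_{1,n+1}$ never appears in a minimal generator of $I_{n+1}$'' is a non sequitur. The condition $\gamma_i(\Icc)=0$ means only that $G_i^+(I_r)=\emptyset$; it says nothing about $G_i(I_r)$, the set of minimal generators that involve both a variable of index $\le i$ and a variable of index $>i$. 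A generator $u\in G_i(I_r)$ certainly uses a fixed variable $x_{1,j}$ with $j\le i$, but it also uses variables of index $>i$, and these are moved by $\pi\in\Inc^i$. So $\pi(u)$ can perfectly well involve $x_{1,n+1}$.

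Concretely, take $c=1$, $i=1$, and the $\Inc^1$-invariant chain with $I_r=\langle x_{1,1}x_{1,2}\rangle$ for $r=2$, so that $I_n=\langle x_{1,1}x_{1,j}\mid 2\le j\le n\rangle$ for $n\ge 2$. Here $G_1^+(I_2)=\emptyset$, hence $\gamma_1(\Icc)=0$, yet $x_{1,1}x_{1,n}$ is a minimal generator of $I_n$ for every $n$, so your conclusion fails. Moreover, $I_n=x_{1,1}\cdot\langle x_{1,2},\dots,x_{1,n}\rangle\cong\langle x_{1,2},\dots,x_{1,n}\rangle(-1)$, so $\pd(R_n/I_n)=n-1$ grows linearly, not constantly. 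The proposition itself survives ($D=1$ works), but your dichotomy ``$\gamma_i=1$ gives linear, $\gamma_i=0$ gives constant'' is wrong. What you are missing is precisely the case $G_i^+(I_r)=\emptyset$ but $G_i(I_r)\ne\emptyset$. The paper handles this by picking $u=u_1u_2\in G_i(I_r)$, passing to the colon chain $\Icc:u_1$ (using \Cref{colon chain}), observing that $u_2\in G_i^+(I_r:u_1)$ so that $\gamma_i(\Icc:u_1)\ge 1$, and then invoking \Cref{pd bound 2} (or the lower bound in \Cref{pd variable}) to push the linear lower bound on $\pd(I_n:u_1)$ back to $\pd I_n$. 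Your proof needs this third case; only when $G_i^+(I_r)=G_i(I_r)=\emptyset$ (so that $G(I_r)=G_i^-(I_r)$ is entirely fixed by $\Inc^i$, and hence $I_n=\langle I_r\rangle_{R_n}$ for $n\ge r$) does the eventual-constancy argument go through.
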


\begin{proof}
 Let $r\ge \ind^i(\Icc)$. We distinguish three cases:

 \emph{Case 1}: $G_i^+(I_r)=G_i(I_r)=\emptyset$. In this case, $G(I_r)=G_i^-(I_r)$ is fixed under the action of $\Inc^i$. It follows that
 \[
  I_n=\langle\Inc^i_{r,n}(I_r)\rangle_{R_n}=\langle I_r\rangle_{R_n} \quad\text{for all }\ n\ge r.
 \]
Hence, $\pd (R_n/I_n) =\pd (R_r/I_r)$ for all $n\ge r$.

 \emph{Case 2}: $G_i^+(I_r)\ne\emptyset$. Then $\gamma_i(\Icc)\ge 1$. Since $\gamma_i(\Icc)\le c=1$, we must have $\gamma_i(\Icc)=1$. Applying \Cref{pd both bounds} the result follows.

 \emph{Case 3}: $G_i(I_r)\ne\emptyset$. Let $u\in G_i(I_r)$ and write $u=u_1u_2$ with $\max(u_1)\le i$, $\min(u_2)> i$. Since $u$ is a minimal generator of $I_r$, $u_1$ is not divisible by any element of $G_i^{-}(I_r)$. Consider the chain $\Icc:u_1$. One has $\gamma_i(\Icc:u_1)\ge 1$ since $u_2\in G_i^+(I_r:u_1)$.
 So using \Cref{pd both bounds} and \Cref{pd bound 2} concludes the proof.
\end{proof}

\end{document}